\documentclass[11pt,a4paper]{amsart}
\usepackage{amsmath,amssymb,amsthm,hyperref}
\usepackage{thmtools}

\usepackage[style=alphabetic,sorting=nty,doi=false,isbn=false,url=false]{biblatex}

\usepackage{cleveref}

\numberwithin{equation}{section}
\hypersetup{  pdfborder={0 0 0},
  colorlinks   = true,
  urlcolor     = blue,
  linkcolor    = blue,
  citecolor   = red}

\hypersetup{hypertexnames=false}

\newcommand{\N}{\mathbb{N}}
\newcommand{\Z}{\mathbb{Z}}
\newcommand{\Q}{\mathbb{Q}}

\newcommand{\cM}{\mathcal{M}}

\newcommand{\cG}{\mathcal{G}}
\newcommand{\tG}{\widetilde{G}}

\newcommand{\gbs}{\mathrm{GBS}}

\renewcommand{\d}{\mathrm{d}}

\newcommand{\vr}{\leqslant_{vr}}

\DeclareMathOperator{\rank}{rank}
\DeclareMathOperator{\st}{Stab}

\DeclareMathOperator{\cent}{C}
\DeclareMathOperator{\out}{Out}
\DeclareMathOperator{\aut}{Aut}
\DeclareMathOperator{\inn}{Inn}

\numberwithin{equation}{section}

\newtheorem{thm}{Theorem}[section]

\newtheorem{prop}[thm]{Proposition}
\newtheorem{lemma}[thm]{Lemma}

\newtheorem{cor}[thm]{Corollary}

\theoremstyle{definition}

\newtheorem{conj}[thm]{Conjecture}

\theoremstyle{remark}

\newtheorem{rem}[thm]{Remark}

\title[Property (LR) for virtually free groups]{Property (LR) and an embedding theorem for virtually free groups}

\author{Ashot Minasyan}
\address{CGTA, School of Mathematical Sciences, University of Southampton, Highfield, Southampton, SO17~1BJ, United Kingdom}
\email{aminasyan@gmail.com}

\keywords{Virtually free groups, property (LR)}
\subjclass{20E05, 20E25, 20E06, 20E07}

\begin{document}

\begin{abstract} We prove that every virtually free group $G$ has property (LR) of Long and Reid: each finitely generated subgroup of $G$ is a retract of a finite index subgroup. The main ingredient in the proof is a new embedding result stating that every countable virtually free group embeds in a double of a finite group. 

As a corollary, we show that any group commensurable with the direct product of a free group and a finitely generated abelian group has (LR). This applies to 
generalized Baumslag-Solitar groups of arbitrary rank $n \in \N$ with finite monodromy, which, in particular, include all non-cyclic one-relator groups with center.
\end{abstract}

\maketitle
\section{Introduction}
A famous theorem of M. Hall \cite{M_Hall} implies that every finitely generated subgroup of a free group is a free factor of a finite index subgroup. Conversely, the work of Brunner and Burns \cite{Brun-Burns}, combined with the accessibility of finitely presented groups proved by Dunwoody \cite{Dunw-acc}, tells us that every finitely presented group satisfying the latter \emph{M. Hall's property} must be virtually free (i.e., it has a free subgroup of finite index). However, not all virtually free groups have this property, because finite order elements can have infinite centralizers and free factors are always malnormal (see \cite{Min-virt_retr_props} for a characterization of virtually free groups with M. Hall's property). 

The best analogue of M. Hall's theorem that one could hope to hold in all virtually free groups is \emph{property (LR)}, defined by Long and Reid in \cite{Long-Reid}, stating that every finitely generated subgroup is a retract of some subgroup of finite index. Recall that a subgroup $H$ is a \emph{retract} of a group $K$ if there is a homomorphism $\rho:K \to H$ such that $\rho(h)=h$, for all $h \in H$ (equivalently, $K=N \rtimes H$, for some $N \lhd K$). A subgroup $H$ is a \emph{virtual retract} of a group $G$ if $H$ is a retract of a finite index subgroup of $G$. Thus $G$ has (LR) if every finitely generated subgroup is a virtual retract.

The main result of this note is the following theorem, answering questions of the author from \cite[Question~11.1]{Min-virt_retr_props} and \cite[Question~20.59]{Kourovka}.

\begin{thm} \label{thm:virt_free->(LR)} Virtually free groups have property (LR).
\end{thm}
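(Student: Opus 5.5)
The plan is to follow the strategy announced in the abstract: first embed $G$ into a double of a finite group, then establish (LR) for such doubles, and finally pull the property back to $G$. Property (LR) passes to subgroups. Indeed, suppose $G\le D$ and $H\le G$ is finitely generated, and $H$ is a retract of a finite index subgroup $K\le D$ via $\rho:K\to H$. Then $\rho$ restricted to $K\cap G$ is a retraction of $K\cap G$ onto $H$, and $K\cap G$ has finite index in $G$. So it suffices to prove (LR) for a class of groups into which every finitely generated virtually free group embeds. We may assume $G$ is finitely generated, since a finitely generated subgroup of $G$ lies in the finitely generated group it generates together with a finite set. Note that $G$ need not be countable-for-free reasons here; finitely generated is enough.

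Step 1 is the embedding. By Karrass--Pietrowski--Solitar, $G$ is the fundamental group of a finite graph of finite groups. We embed $G$ into $D = A *_C A$, a double of a finite group $A$ along a subgroup $C$. To do this, first embed all vertex groups into one finite group, for instance a large symmetric group $F$, in such a way that the edge monomorphisms become restrictions of inner automorphisms. This can be arranged by choosing the embeddings so that corresponding edge subgroups act freely in the regular representation, and then conjugating. After this, $G$ sits inside a multiple HNN extension of $F$ over finite subgroups with conjugating stable letters, which is essentially $F*$(free group). That group in turn embeds in a double $A*_C A$, for example by taking $A = F\times B$ or a wreath-type enlargement and using the standard trick that $\langle A, tAt^{-1}\rangle$ inside $A*_C A$ contains free subgroups acting compatibly.

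Step 2 is proving (LR) for $D=A*_CA$. The involution $\sigma$ swapping the two factors gives a retraction $D\to A$ whose kernel $N$ is free, because it meets conjugates of $A$ trivially. So $D=N\rtimes A$, and $D$ acts on its Bass--Serre tree. Given a finitely generated $H\le D$, we use Stallings-type folding for graphs of groups to represent $H$ by a finite core of the quotient graph $H\backslash T$. We then complete this core to a finite cover $K\backslash T$ corresponding to a finite index $K$, in the spirit of M. Hall, so that $K$ splits as a graph of groups containing $H$'s graph as a subgraph. The retraction $K\to H$ is defined by collapsing the added part: each added vertex group, which is a conjugate of $A$, is mapped into $H$ through the doubling symmetry, which sends a copy of $A$ identically to an adjacent copy agreeing on $C$. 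This is exactly where the double structure is used. Because the two vertex groups of each edge are identical copies of $A$ glued along $C$, any vertex group of the completion can be folded onto an adjacent $H$-vertex group compatibly with the edge maps.

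The main obstacle I expect is the completion in Step 2. The finite index subgroup $K$ has to be built so that the added pieces retract consistently, and loops in the added part must map to elements of $H$ while respecting the edge identifications. I would first attempt to choose the completion via a homomorphism $D\to \mathrm{Sym}(X)$ extending the action of $H$ on cosets, as in Hall's proof. The embedding in Step 1 is the other delicate point, but it is more routine.
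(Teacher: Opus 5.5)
Your outline has the right overall shape: embed into a double, prove (LR) for doubles via the doubling symmetry, then pass to subgroups. However, three steps are either wrong or missing.

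\textbf{The reduction to finitely generated $G$ is invalid.} Property (LR) is not a local property. Every finitely generated subgroup of $\Q$ is cyclic and has (LR), yet $\Z$ is not a virtual retract of $\Q$. Knowing $H\vr G_0$ for some finitely generated $G_0\leqslant G$ gives $H\vr G$ only if also $G_0\vr G$. The paper obtains such a $G_0$ as follows. It embeds an arbitrary virtually free group (via Dicks--Dunwoody) into a multiple special HNN-extension $\langle F,\{t_i\}_{i\in I}\mid [t_i,B_i]=1\rangle$ of a finite group. There $H$ lies in $K=\langle F,\{t_j\}_{j\in J}\rangle$ for some finite $J\subseteq I$, and $K$ is a retract (kill the remaining $t_i$).

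\textbf{Step 1 omits the embedding into a double.} Your permutation-representation argument, combined with coning off the graph, does land $G$ in a multiple special HNN-extension $\langle F,t_1,\dots,t_k\mid [t_i,B_i]=1\rangle$. But this group is not ``essentially $F*$(free group)'' when $B_i\neq\{1\}$. The passage from it to a double is the main new ingredient of the paper, and you do not argue it: a double has no stable letter $t$, and there is no ``standard trick'' to quote. What is needed is the following.
\begin{itemize}
\item A finite quotient $\varphi$ onto a finite group $Q$ that is injective on $S=F\cup\{t_i^{-1}ft_j\mid f\in F\}$.
\item The map $t_i\mapsto s_it^ns_i^{-1}$, with $s_i=\varphi(t_i)$, into $\langle Q,t\mid [t,\varphi(F)]=1\rangle$.
\item The map $t\mapsto (y^{-1}y')^m$ into the double $(Q\times C_p)*_{\varphi(F)}(Q\times C_p)'$, where $y$ generates $C_p$ and $y'$ is its copy.
\end{itemize}
In both steps, injectivity comes from Britton's lemma or normal forms combined with \Cref{lem:ping-pong}. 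Britton's lemma or normal forms show that the conjugates $dhd^{-1}$, $d$ in a transversal, generate pairwise trivially intersecting cyclic subgroups; in the first step this is exactly where injectivity of $\varphi$ on $S$ enters. Ping-pong then makes suitable powers a free basis matched bijectively with the generating set $\{dt_id^{-1}\}$ of the kernel.

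\textbf{Step 2 names the obstacle but does not overcome it.} A Hall-type completion via $D\to\mathrm{Sym}(X)$ will not do. For your ``fold'' to be a homomorphism, every added vertex group and every loop through the added part must map into $H$ compatibly. An arbitrary finite completion of the core has no reason to permit this. The completion that works is a specific one: the mirror double of the core along its boundary edges. The paper builds it algebraically.
\begin{itemize}
\item Extend the action on $T$ to $\widetilde D=D\rtimes\langle\gamma\rangle$, where $\gamma$ is the factor-swapping involution acting as an inversion of an edge.
\item Let $i_e$ be the conjugate of $\gamma$ inverting the edge $e$.
\item Take a cocompact $H$-invariant subtree $U$ and set $N=\langle i_e\mid e\in\partial U\rangle$.
\end{itemize}
Then $H$ normalizes $N$. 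For a fixed vertex $u\in U$, one gets $|\widetilde D:HN|<\infty$ by induction on $\d_T(g.u,U)$: reflect in the first boundary edge of the geodesic from $U$ to $g.u$. One gets $H\cap N=\{1\}$ by ping-pong on the half-trees $T_e$ beyond the boundary edges. Hence $K=HN\cap D$ has finite index in $D$ and retracts onto $H$ with kernel $N\cap D$.

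Your fold via the doubling symmetry is exactly this retraction, since $K\backslash T$ consists of two mirror copies of $H\backslash U$ joined along the boundary edges. But the fold exists only for this particular $K$, and your proposal never constructs it.
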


Of course, M. Hall's theorem implies that free groups have (LR), but this property is quite sensitive and is not always inherited by finite index supergroups (see \cite[Proposition~1.7]{Min-virt_retr_props}). The only other known classes of groups with (LR) are finitely generated virtually abelian groups \cite{Min-virt_retr_props}, fundamental groups of closed surfaces \cite{Scott-surface}, limit groups \cite{Wilton-limit} and some lamplighter groups \cite{Min-virt_retr_props}; additionally, property (LR) is preserved by free products \cite{GMS} (but not by direct products).

Every group with (LR) is \emph{subgroup separable} (a.k.a. LERF), i.e., finitely generated subgroups are closed in the profinite topology, see \cite{Long-Reid,Min-virt_retr_props}, but property (LR) is much more restrictive. For example,  every virtually polycyclic group is subgroup separable but it does not have (LR) unless it is virtually abelian, see \cite[Proposition~9.1]{Min-virt_retr_props}. 
However, (LR) is preserved under passing to arbitrary subgroups, so in order to prove \Cref{thm:virt_free->(LR)} for  a finitely generated virtually free group, we first embed it in a virtually free group of simpler form.

Let $C$ be a group with a subgroup $B \leqslant C$. Recall that a \emph{double of $C$ over $B$} is the amalgamated product
\begin{equation}\label{eq:double_of_C_over_B}
G=C*_B C'=\langle C,C' \mid b=\xi(b),~\text{ for all } b \in B\rangle,
\end{equation}
where $C'$ is a copy of $C$ and $\xi:C \to C'$ is an isomorphism. If the amalgamated subgroup $B$ is not important, we will simply say that $G$ is a \emph{double of $C$}.
The following, perhaps unexpected, embedding result is key to our proof of \Cref{thm:virt_free->(LR)}, and may also be of independent interest.

\begin{thm}\label{thm:main_emb} Every countable virtually free group embeds as a subgroup in a double of a finite group.
\end{thm}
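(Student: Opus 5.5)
The strategy is to reduce to the finitely generated case and then realize a finitely generated virtually free group as a subgroup of a graph of finite groups in which every vertex group is isomorphic to one fixed finite group. That graph of groups is then recognized as a finite index subgroup of a double.

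\textbf{Step 1: reduction to finite generation.} Let $G$ be a countable virtually free group with a free normal subgroup $F$ of finite index. First I would embed $G$ in a finitely generated virtually free group. Writing $G$ as the fundamental group of a graph of finite groups of bounded order (Karrass--Pietrowski--Solitar, plus the countable version via Bass--Serre theory), I would embed $G$ in $G * F_2$-type extensions. Concretely, I would use that a countable graph of finite groups with bounded vertex orders embeds in a finite graph of finite groups after adding stable letters conjugating edge groups appropriately, as in the classical Higman--Neumann--Neumann embedding of countable groups into $2$-generated ones. Alternatively, it suffices to observe that the target double only needs to be one fixed double, so a direct limit argument can replace this step.

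\textbf{Step 2: equalize the vertex groups.} Let $G=\pi_1(\mathcal{G},\Gamma)$ with $\Gamma$ a finite graph and finite vertex groups $G_v$. Take a finite group $C$ containing copies of all $G_v$; for instance $C=\mathrm{Sym}(n)$ for large $n$. I would then embed $G$ into the fundamental group of a graph of groups whose vertex groups are all $C$. To do this, replace each $G_v$ by $C$. Each edge $e$ with edge group $G_e$ gives two embeddings $G_e\to C$. These can be made conjugate inside $C$ by choosing $n$ so that all embeddings of a given finite group into $\mathrm{Sym}(n)$ are via the regular representation repeated, hence conjugate. The injectivity of $G\to$ the new graph of groups follows from the standard fact that enlarging vertex groups compatibly along edges induces an injection of fundamental groups (normal forms).

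\textbf{Step 3: recognize the result inside a double.} A graph of groups in which every vertex group is $C$ and every edge group is a subgroup of $C$, embedded in the two adjacent vertices by maps that agree up to conjugation, is the kind of object that arises as a finite index subgroup of a double $C*_B C$, or of an amalgam of $C$ with a bigger group. The key trick I would try is to pick $B$ large, e.g. $B$ a point stabilizer, so that the Bass--Serre tree of $C*_B C'$ has quotient graphs of finite index subgroups that realize arbitrary finite bipartite graphs with prescribed edge groups. To handle arbitrary edge groups, I would replace $C$ by $C\times C$ or by a wreath product so that every needed $G_e$ appears as an intersection $B\cap B^{c}$. I would then realize $\pi_1(\mathcal{G},\Gamma)$ as a subgroup by constructing a covering of graphs of groups, in the Bass--Serre sense, onto the edge $C\!-\!B\!-\!C'$.

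\textbf{Main obstacle.} The hard step is Step 3: producing a morphism of graphs of groups from the equalized graph to the one-edge graph of the double that is a covering, or at least locally injective, so that it induces an embedding. Local injectivity requires distinct incident edges at a vertex to map to distinct cosets of $B$ in $C$. I would first try to achieve this by a subdivision of $\Gamma$, making it bipartite, combined with choosing $C$ with $|C:B|$ exceeding all vertex degrees. The remaining difficulty is to arrange that the edge groups are exactly of the form $B\cap B^{c}$-type subgroups compatible with the monomorphisms. This is where the specific choice of $C$ and $B$ must be engineered.
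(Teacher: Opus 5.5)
There is a genuine gap: Step~3, which carries the whole content of the theorem, is not carried out. You say yourself that $C$ and $B$ ``must be engineered'', and replacing $C$ by $C\times C$ or a wreath product is not an argument.

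Moreover, the output of Step~2 cannot be fed into a double of the same group $C$. Suppose a vertex group $C$ maps isomorphically onto the factor $C$ of $C*_BC'$. Local injectivity of the morphism at that vertex (Bass's immersion condition, which is what gives injectivity on fundamental groups) requires $xB_e\mapsto xc_eB$ to be injective on $C/B_e$. This forces $B_e=c_eBc_e^{-1}$, and then the map is already a bijection onto $C/B$, so no second incident edge is possible. In the same vein, a finite index subgroup of $C*_BC'$ all of whose vertex groups are full vertex stabilisers contains $C$ and $C'$, hence is the whole group.

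So the double must be of a strictly larger finite group, and the embedding is typically an immersion, not a covering. You would then need a single amalgamated subgroup $B$ that accommodates all the different edge groups $G_e$, with disjoint double cosets at each vertex and compatible twisting elements at both ends of every edge. Nothing in the proposal produces this.

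The paper gets around exactly this difficulty with three ideas your proposal lacks.

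\textbf{First} (\Cref{lem:emb_in_multiple_HNN} and \Cref{lem:emb_in_multiple_special_HNN}):
\begin{itemize}
\item take $\varphi:G\to A$ with $A$ finite and free kernel;
\item cone off $\Gamma$ with an apex whose vertex group is $A$, and collapse the cone's maximal tree, leaving one vertex $A$ with loops;
\item use the retraction $\rho$ to replace each stable letter $t_i$ by $\rho(t_i)^{-1}t_i$.
\end{itemize}
This yields a multiple special HNN-extension $\langle A,t_1,\dots,t_k\mid [t_i,B_i]=1\rangle$.

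\textbf{Second} (\Cref{prop:emb_in_single_spec_HNN}): take a finite $p$-by-$A$ quotient $\varphi:G\to C$ that is injective on $\{t_i^{-1}at_j\}\cup A$. Send $t_i\mapsto (s_its_i^{-1})^n$ in $\langle C,t\mid [t,A]=1\rangle$, where $s_i=\varphi(t_i)$. Each $s_its_i^{-1}$ centralizes $B_i$ because $s_i$ does. Britton's lemma together with \Cref{lem:ping-pong} shows that the images of the free basis $\{dt_id^{-1}\}$ of the kernel freely generate, so the map is injective. This is how the different edge groups $B_i$ are absorbed into a single edge group.

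\textbf{Third} (\Cref{thm:spec_HNN_embeds_in_double}): in the double of $A'\times\langle y\rangle_p$ over $B$, the element $y^{-1}y'$ centralizes $B$. Hence $t\mapsto (y^{-1}y')^n$ embeds $\langle A',t\mid [t,B]=1\rangle$.

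Two smaller points:
\begin{itemize}
\item Your ``direct limit'' alternative in Step~1 is unjustified. Embeddings of the finitely generated pieces need not be compatible, nor land in one fixed double. Cite Scott's Embedding Theorem instead, as the paper does.
\item In Step~2 it is false that all embeddings of a finite group into $\mathrm{Sym}(n)$ are conjugate. What is true is this: if each $G_v$ is embedded so that it acts freely on $n$ points, with $n$ a common multiple of the $|G_v|$, then both induced embeddings of each $G_e$ are free actions and hence conjugate. So Step~2 is repairable, but it does not help with Step~3.
\end{itemize}
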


We actually prove a stronger version of \Cref{thm:main_emb} in the case when the virtually free group is finitely generated, see \Cref{thm:spec_HNN_embeds_in_double}. In particular, we obtain the following consequence.

\begin{cor}\label{cor:free-by-p_in_double_of_p} If $A$ is a finite $p$-group, for some prime $p \in \N$, (or a finite solvable group) then every finitely generated free-by-$A$ group embeds in a double of a finite $p$-group (respectively, a finite solvable group). 
\end{cor}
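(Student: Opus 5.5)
This corollary should follow from the refined version of \Cref{thm:main_emb} stated as \Cref{thm:spec_HNN_embeds_in_double}; the point is to track which finite group gets doubled. Let $G$ be finitely generated and free-by-$A$, i.e.\ there is a short exact sequence $1 \to F \to G \to A \to 1$ with $F$ free. First I would reduce to the case where $G$ is a fundamental group of a finite graph of finite groups. Since $F$ is torsion-free, every finite subgroup of $G$ maps injectively into $A$, so all vertex groups embed in $A$. The finite groups that occur are therefore subgroups of a finite $p$-group, respectively of a finite solvable group, and so are themselves finite $p$-groups, respectively finite solvable groups.

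Next, following the strategy I expect in \Cref{thm:spec_HNN_embeds_in_double}, I would embed $G$ in a ``special'' HNN-extension or amalgam and then in a double $C *_B C'$. The aim is to arrange that the finite group $C$ is built from the vertex groups by operations that preserve the relevant class. The natural construction is $C = K \wr A$, or some finite group containing $K$ and admitting the map to $A$, where $K$ is a finite $p$-group. One concrete route uses the homomorphism $G \to A$: one can probably take $C$ to be an extension of a finite group of the same class (for instance a finite quotient of $F$ that is a $p$-group) by $A$. Such an extension is again a $p$-group, respectively solvable, because these classes are closed under extensions. Concretely, I would choose a characteristic finite-index subgroup $N \le F$, normal in $G$, with $F/N$ a $p$-group; this is possible since free groups are residually $p$. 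For the solvable case I would choose $N$ with $F/N$ solvable (for example abelian, using a verbal subgroup of $F$). Then $G/N$ is a finite group in the right class, and I would verify that the construction in \Cref{thm:spec_HNN_embeds_in_double} produces a double of a finite group of the form $G/N$, or a subgroup or wreath-type extension of it.

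The main obstacle is confirming exactly what finite group \Cref{thm:spec_HNN_embeds_in_double} doubles and that it can be taken to be an extension of $A$ by a $p$-group (respectively solvable group), rather than something like a symmetric group on cosets. If that theorem yields the finite group as a quotient $G/N$ with $N \le F$ free and torsion-free, chosen so that the finite subgroups of $G$ inject, then it suffices to take $N$ to be a normal subgroup of $G$ contained in $F$ with $F/N$ a $p$-group. I would get such an $N$ by intersecting the $G$-conjugates of a suitable normal subgroup of $F$. The closure properties of the two classes under subgroups, quotients and extensions then give the corollary.
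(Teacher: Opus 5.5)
Your reduction is the paper's (apply \Cref{thm:spec_HNN_embeds_in_double}, then use that the two classes are closed under extensions), but as written your argument is conditional. The step you call the main obstacle is never resolved, and it is the only step the corollary needs. It needs no verification, because it is the conclusion of \Cref{thm:spec_HNN_embeds_in_double}. For every prime $p$, a finitely generated free-by-$A$ group embeds in a double of a finite \emph{$p$-by-$A$} group $C$, i.e.\ $C$ has a normal $p$-subgroup $P$ with $C/P\cong A$. If $A$ is a $p$-group, then $|C|=|P|\,|A|$ is a power of $p$. If $A$ is solvable, then $C$ is solvable, as an extension of the solvable (indeed nilpotent) $p$-group $P$ by $A$. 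That is the whole proof, and it also rules out your worry about a symmetric group on cosets.

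The rest of your plan is unnecessary and partly off target. The reduction to vertex groups plays no role, since the class of the doubled group is controlled by $A$ and $p$, not by the vertex groups of $G$. Your guess that the doubled group is a quotient $G/N$ is also not what happens. In the paper it is $A'\times C_p$, where $A'$ is a $p$-by-$A$ quotient of a multiple special HNN-extension of $A$ containing $G$ (see \Cref{prop:emb_in_single_spec_HNN}); only the fact that it is $p$-by-$A$ matters. Finally, the solvable case does not need a separate choice of $N$ with $F/N$ solvable. Fix any prime $p$, apply the same theorem, and use that finite $p$-groups are solvable.
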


Since a double of a finite group is virtually free,  we can combine \Cref{thm:virt_free->(LR)} with \Cref{thm:main_emb} to deduce the following.

\begin{cor} Every finitely generated virtually free group embeds as a virtual retract in a double of a finite group.
\end{cor}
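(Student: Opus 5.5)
This corollary should follow by chaining \Cref{thm:main_emb} with \Cref{thm:virt_free->(LR)}, using that being a virtual retract is transitive along the right chain. Let $G$ be a finitely generated virtually free group. By \Cref{thm:main_emb}, $G$ embeds in some double $D=C*_B C'$ of a finite group $C$. A double of a finite group is the fundamental group of a finite graph of finite groups, so it is virtually free. Indeed, the kernel of the map from $D$ to the finite group $C$ (sending $C$ and $C'$ to $C$ via $\mathrm{id}$ and $\xi^{-1}$) meets every conjugate of $C$ and $C'$ trivially. That kernel therefore acts freely on the Bass--Serre tree, hence is free, and it has finite index.

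Next, apply \Cref{thm:virt_free->(LR)} to $D$. The image of $G$ in $D$ is finitely generated, so $D$ has a finite index subgroup $K$ containing $G$ together with a retraction $\rho\colon K\to G$. This is exactly the statement that $G$ is a virtual retract of $D$, which is the corollary.

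No step is a real obstacle here: all the work lives in \Cref{thm:main_emb} and \Cref{thm:virt_free->(LR)}. The only point to check is that the double produced is virtually free, and the graph-of-groups argument above handles that.
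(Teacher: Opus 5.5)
Your proposal is correct and follows the paper's own route: embed $G$ in a double of a finite group via \Cref{thm:main_emb}, observe that the double is virtually free, and apply \Cref{thm:virt_free->(LR)} to the finitely generated subgroup $G$. You could also skip the virtual-freeness check by applying \Cref{thm:double_has_(LR)} to the double directly.
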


In order to prove \Cref{thm:main_emb}, we can assume that the virtually free group is finitely generated (by Scott's embedding theorem \cite{Scott-emb}). By a theorem of Karrass, Pietrowski and Solitar \cite{KPS}, building on Stallings' theory of groups with infinitely many ends \cite{Stal},  each finitely generated virtually free group  splits as the fundamental group of a finite graph of groups with finite vertex groups. Using this decomposition, in a series of lemmas we show that every finitely generated virtually free group embeds in a single special HNN-extension of a finite group $A'$ (\Cref{prop:emb_in_single_spec_HNN}). Finally, in \Cref{thm:spec_HNN_embeds_in_double} we embed this special HNN-extension in a double of $A' \times C_p$, for any prime $p \in \N$.

We have thus reduced the proof of \Cref{thm:virt_free->(LR)}, in the case when the virtually free group is countable, to showing (LR) for a double $G$, given by  \eqref{eq:double_of_C_over_B} for some finite group $C$. We deal with this case using an idea of Scott from \cite{Scott-surface} (and also Haglund's interpretation of this idea in \cite{Haglund}), where property (LR) was proved for fundamental groups of compact surfaces. The group $G=C*_B C'$ admits a natural involution $\gamma \in \aut(G)$ which interchanges the two factors $C$ and $C'$ and fixes $B$ pointwise. We observe that the natural action of $G$ on its Bass-Serre tree $T$ extends to an action of 
$\tG=G \rtimes \langle \gamma\rangle_2$ on $T$ in such a way that every edge is inverted by a conjugate of $\gamma$. If $H \leqslant G$ is a finitely generated subgroup then there is a minimal invariant subtree $U$ on which $H$ acts cocompactly. The subgroup $N$, generated by the inversions in the edges that are on the boundary of $U$, will be normalized by $H$ and will satisfy $|\tG:HN|<\infty$ and $H \cap N=\{1\}$. The subgroup $K=HN \cap G=H(N \cap G)$ is thus a finite index subgroup of $G$ retracting onto $H$.

Since property (LR) is stable under taking direct products with finitely generated virtually abelian groups \cite[Proposition~5.6]{Min-virt_retr_props}, \Cref{thm:virt_free->(LR)} can be used to find even more groups with (LR). Recall that two groups are said to be \emph{commensurable} if they have isomorphic subgroups of finite index.

\begin{cor}\label{cor:gps_commens_to_FxZn} Suppose that $G$ is a group commensurable with the direct product of a free group and a finitely generated abelian group. Then $G$ embeds as a finite index subgroup in the direct product of a virtually free group and a finitely generated virtually abelian group. Therefore, 
\begin{itemize}
  \item[(i)] $G$ has property (LR) and is subgroup separable;
  \item[(ii)] $G$ is hereditarily conjugacy separable;
  \item[(iii)] $G$ is subgroup conjugacy distinguished.
\end{itemize}
\end{cor}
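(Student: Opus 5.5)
We first prove the embedding claim and then read off (i)–(iii) from known closure properties.

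Let $G$ be commensurable with $F\times A$, where $F$ is free and $A$ is finitely generated abelian. Then $G$ has a finite index subgroup $G_0$ isomorphic to a finite index subgroup of $F\times A$. We may pass to a smaller finite index subgroup, since the corresponding subgroup of $F\times A$ is again free-by-abelian of the same type. Every finite index subgroup of $F\times A$ contains one of the form $F_1\times A_1$, with $|F:F_1|<\infty$ and $A_1\cong\Z^n$ free abelian. Intersecting with a normal core, we may assume $G_0\lhd G$, $|G:G_0|<\infty$ and $G_0\cong F_1\times \Z^n$, where $F_1$ is free. The centre of $G_0$ is $Z=Z(G_0)$, which equals $\Z^n$ when $F_1$ is non-cyclic. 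The degenerate cases $F_1$ trivial or cyclic make $G$ virtually abelian, and then the statement follows from the (LR) result for virtually abelian groups. Since $Z$ is characteristic in $G_0$, it is normal in $G$.

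Next consider the conjugation action of $G$ on $Z\cong\Z^n$. Its kernel $C=\cent_G(Z)$ has finite index, because $G_0\le C$. Set $\Phi=G/\cent_G(Z)$, a finite subgroup of $GL_n(\Z)$. We then build the two factors.
\begin{itemize}
\item[(a)] $Q=G/Z$ is virtually free, since it contains $G_0/Z\cong F_1$ with finite index.
\item[(b)] For the virtually abelian factor, we want a homomorphism $\psi:G\to V$ with $V$ finitely generated virtually abelian and $\psi|_Z$ injective.
\end{itemize}
The natural candidate for (b) is $G/G_0'$ or $G/[G_0,G_0]$ modulo torsion, adjusted appropriately. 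Here $G_0/[G_0,G_0]\cong F_1^{ab}\times\Z^n$, and $G/[G_0,G_0]$ is finitely generated and virtually abelian. Moreover $Z$ maps injectively into it, because $Z\cap[G_0,G_0]=Z\cap F_1'=\{1\}$. The map $G\to Q\times V$, $g\mapsto(gZ,\psi(g))$, has kernel $Z\cap\ker\psi=\{1\}$, so it is injective.

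The image must have finite index. It contains the image of $G_0$, so it suffices to show that $G_0$ has finite index image. An element $(fZ,\psi(fz))$ of the image, as $z$ ranges over $Z$, covers $\{gZ\}\times\psi(Z)\cdot\psi(f)$. So the image of $G_0$ contains $\{1\}\times\psi(Z)$, and its projection to $Q$ is $G_0/Z$, which has finite index. Hence the image of $G_0$ contains $\{1\}\times \psi(Z)$ together with elements over every point of $G_0/Z$. The index in $Q\times V$ is therefore finite provided $\psi(Z)$ has finite index in $\psi(G_0)$ and $\psi(G_0)$ has finite index in $V$.

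This is the main obstacle, because $\psi(G_0)=G_0/[G_0,G_0]$ contains $F_1^{ab}$, which is not covered by $Z$. To fix it, replace $[G_0,G_0]$ by a larger normal subgroup $M\lhd G$ with $M\le G_0$, $M\cap Z=\{1\}$ and $G_0/MZ$ finite. Equivalently, we need a $G$-invariant finite index subgroup of $G_0/[G_0,G_0]\cong F_1^{ab}\oplus\Z^n$ of the form $M/[G_0,G_0]$ with $M\cap Z=1$, i.e. a $G$-invariant complement, up to finite index, to the image of $Z$. Rationally, $G_0/[G_0,G_0]\otimes\Q$ is a $\Q[G/G_0]$-module containing $Z\otimes\Q$ as a submodule. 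By Maschke's theorem this submodule has an invariant complement $W$. Take $M$ to be the preimage in $G_0$ of $W\cap(\text{lattice})$; it is normal in $G$ and meets $Z$ trivially. Moreover $MZ$ has finite index in $G_0$, since the corresponding lattice has full rank. Then $V=G/M$ is finitely generated and virtually abelian, since it contains $G_0/M$, which is virtually $\Z^n$. So $\psi:G\to G/M$ works, and the index argument above goes through.

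Finally, (i) follows from \Cref{thm:virt_free->(LR)} together with \cite[Proposition~5.6]{Min-virt_retr_props} and heredity of (LR) to subgroups. Subgroup separability follows from (LR). Parts (ii) and (iii) follow from the known facts that hereditary conjugacy separability and subgroup conjugacy distinguishedness hold for such direct products and pass to finite index subgroups, citing the relevant literature for virtually free and virtually abelian groups.
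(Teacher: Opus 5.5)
Your embedding argument follows the skeleton of the paper's \Cref{lem:comm_with_FxZn->emb}. You take $Z$ to be the centre of a normal finite-index subgroup $G_0\cong F_1\times\Z^n$ and embed $G$ into $G/Z\times G/M$, where $M\lhd G$, $M\cap Z=\{1\}$ and $|G:ZM|<\infty$. The paper gets such an $M$ (its $R$) by quoting \cite[Lemma~4.3]{Merl-Min-1} for the normal virtual retract $Z$. You instead build it by hand from $G_0^{ab}\otimes\Q$ and Maschke's theorem. That is a reasonable self-contained substitute, and it is correct when $F$ is finitely generated. There are, however, two gaps.

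\textbf{Infinitely generated $F$.} The statement, like \Cref{lem:comm_with_FxZn->emb} and \Cref{thm:virt_free->(LR)}, does not assume $F$ is finitely generated. In that case your step ``$MZ$ has finite index in $G_0$, since the corresponding lattice has full rank'' has no content, and an arbitrary invariant complement $W$ fails.

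For example, take $G=G_0=F\times\langle z\rangle$ with $F$ free on $e_1,e_2,\dots$, and let $W$ be spanned by the vectors $e_i-\frac{1}{p_i}z$, where $p_i$ is the $i$-th prime. The action is trivial, so $W$ is a legitimate invariant complement. But $G/M$ is the subgroup of $\Q$ generated by $1$ and all the $1/p_i$. Hence $|G_0:MZ|=\infty$ and $V=G/M$ is not finitely generated.

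You need to choose the complement carefully. Let $L=G_0^{ab}=F_1^{ab}\oplus Z$ and let $\pi\colon L\to Z$ be the coordinate projection. Let $g_1,\dots,g_m$ be coset representatives of $G_0$ in $G$, and write $\cdot$ for the conjugation action on $L$ and $Z$, which factors through $G/G_0$. Set $\sigma(x)=\sum_{i=1}^m g_i\cdot\pi(g_i^{-1}\cdot x)$. Then:
\begin{itemize}
\item[(a)] $\sigma$ is a $G$-equivariant homomorphism $L\to Z$.
\item[(b)] $\sigma|_Z$ is multiplication by $m$.
\end{itemize}
Let $M$ be the preimage of $\ker\sigma$ in $G_0$. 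Then $M$ is normal in $G$ and $M\cap Z=\{1\}$. Moreover $|G_0:MZ|=|\sigma(L):mZ|\le m^n$, and $G_0/M\cong\sigma(L)\leqslant\Z^n$. No rank hypothesis is needed. Rationally, this amounts to taking $W$ to be the kernel of the averaged integral projection.

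\textbf{Part (iii).} You say subgroup conjugacy distinguishedness holds for the direct product and ``passes to finite index subgroups''. That is not a known general principle, and you give no argument for it. Conjugacy-type properties typically do not descend to finite-index subgroups; this is exactly why (ii) is phrased with ``hereditarily''.

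The missing link is \cite[Lemma~6]{Rib-Zal-conj_dist_sbgps}: a virtual retract of a hereditarily conjugacy separable group is conjugacy distinguished. Apply it to $G$ itself. By (i), every finitely generated $H\leqslant G$ is a virtual retract of $G$. By (ii), $G$ is hereditarily conjugacy separable. So (iii) follows directly.

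For (ii) you should also name the specific input. A finite-index subgroup of a direct product of hereditarily conjugacy separable groups is hereditarily conjugacy separable \cite[Lemma~7.3]{Mart-Min}. Virtually free groups (Dyer) and finitely generated virtually abelian groups are conjugacy separable. Both classes are closed under subgroups, so both are hereditarily conjugacy separable.
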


Recall that a group
$G$ is \emph{conjugacy separable} if for any two non-conjugate elements $x,y \in G$ there is a finite group $M$ and a homomorphism $\varphi:G \to M$ such that $\varphi(x)$ is not conjugate to $\varphi(y)$ in $M$. A group is hereditarily conjugacy separable if every finite index subgroup is conjugacy separable. A subgroup $H$ of a group $G$ is \emph{conjugacy distinguished} if for every element $g \in G$, not conjugate to an element of $H$, there is a homomorphism $\varphi:G \to M$, where $|M|<\infty$, such that $\varphi(g)$ is not conjugate to an element of $\varphi(H)$ in $M$. A group $G$ is \emph{subgroup conjugacy distinguished} if every finitely generated subgroup $H \leqslant G$ is conjugacy distinguished. In \cite{Rib-Zal-conj_dist_sbgps} Ribes and Zalesskii proved that a hereditarily conjugacy separable group with (LR) is subgroup conjugacy distinguished. 

\Cref{cor:gps_commens_to_FxZn} applies to many groups studied in Geometric Group Theory. This includes unimodular generalized Baumslag-Solitar groups \cite[Proposition~2.6]{Levitt} and, more broadly, generalized Baumslag-Solitar groups of arbitrary rank $n \in \N$ with finite monodromy (see \cite[Theorem~3.4]{Button-GBSn}). In particular, one-relator groups with center have (LR) and are subgroup conjugacy distinguished, because they are either cyclic or unimodular generalized Baumslag-Solitar groups by \cite[Theorems~1,3]{Pietr}. 
Finally, \Cref{cor:gps_commens_to_FxZn} also applies to any extension of a virtually non-abelian free group $A$ by a finitely generated virtually abelian group $B$, as long as  the natural image of $B$ in $\out(A)$ is finite (e.g., this is the case for a free-by-cyclic group where some power of the acting automorphism is inner), see \Cref{cor:benign_ext_of_vfree_by_vab_embeds}.

In view of \Cref{cor:gps_commens_to_FxZn}, it is natural to ask for which other classes of groups property (LR) is invariant under commensurability. 

\begin{conj}\label{conj:hyp_with_(LR)} If a hyperbolic group $G$ has (LR) then so does any finite index supergroup of $G$. 
\end{conj}

By a combination of results of Haglund--Wise \cite{Hag-Wise}, Wise \cite{Wise-book} and Linton \cite{Linton-loc_qc}, every one-relator group with torsion has a finite index subgroup with (LR). So, a positive answer to \Cref{conj:hyp_with_(LR)} would imply property (LR) for all one-relator groups with torsion.

In view of Wilton's theorem \cite{Wilton-limit} that limit groups have (LR), it seems sensible to propose the following.

\begin{conj}\label{conj:virt_limit_gps_have_(LR)} Finite index supergroups of limit groups have (LR).
\end{conj}

\subsection*{Acknowledgements} The author would like to thank Pavel Zalesskii for useful discussions and Jon Merladet Urig\"uen for his comments on an earlier draft of this note.

\section{Embedding virtually free groups}
In this section we will prove our main embedding result, \Cref{thm:main_emb}. We will make use of the following fundamental theorem, which was proved by Karrass, Pietrowski and Solitar \cite{KPS} in the case when the group is finitely generated, and by Dicks and Dunwoody \cite{DD} in the general case.

\begin{thm}[{\cite[Theorem~1]{KPS} and \cite[Theorem~1.6 in Chapter~IV]{DD}}]\label{thm:KPS} \phantom{a}
\begin{itemize}
  \item[(a)] A finitely generated group $G$ is virtually free if and only if $G$ decomposes as the fundamental group of a finite graph of groups with finite vertex groups.
  \item[(b)] An arbitrary group $G$ is virtually free if and only if $G$ is the fundamental group of a graph of groups such that all the vertex groups are finite of uniformly bounded orders.
\end{itemize}
\end{thm}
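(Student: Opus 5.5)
Since this is a classical result quoted from \cite{KPS} and \cite{DD}, I would only reconstruct the argument through Bass--Serre theory and standard structural results, not reprove everything from scratch. For the ``if'' directions I would use the action of $G=\pi_1(\cG)$ on its Bass--Serre tree $T$. Stabilizers of vertices of $T$ are conjugates of vertex groups, hence finite. Any subgroup meeting every conjugate of every vertex group trivially acts freely on $T$, so it is free.

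For part (a), ``if'' direction, with a finite graph of finite groups, I would construct a homomorphism $\varphi:G\to M$ to a finite group that is injective on every vertex group. Take a common multiple $n$ of the vertex group orders. Let each vertex group $G_v$ act freely on a set of size $n$ by a multiple of its regular representation. The restrictions of these actions to an edge group $G_e$, coming from its two endpoints, are both free $G_e$-sets of size $n$, hence isomorphic. So there is a bijection conjugating one to the other, and we may assign it to the stable letter or use it to glue along tree edges. This yields $\varphi:G\to \mathrm{Sym}(n)$ injective on vertex groups. Then $\ker\varphi$ is free and of finite index.

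For part (a), ``only if'' direction, let $G$ be finitely generated and virtually free. Then it is finitely presented, since a finite extension of a finitely presented group is finitely presented. By Dunwoody's accessibility \cite{Dunw-acc}, $G$ is the fundamental group of a finite graph of groups with finite edge groups whose vertex groups have at most one end. Each vertex group is a finitely generated subgroup of $G$, hence virtually free, and virtually free groups with at most one end are finite. Indeed, a virtually free group with infinite free part has either two ends (virtually $\Z$) or infinitely many ends, by Stallings \cite{Stal}. So all vertex groups are finite.

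For part (b), ``only if'' direction, let $F\lhd G$ be free of finite index. I would follow Dicks--Dunwoody: $G$ has cohomological dimension at most one over $\Q$, or one can argue directly via almost invariant sets. Then Dunwoody's theorem gives a $G$-tree with finite vertex stabilizers. Their orders are bounded by $|G:F|$, since each stabilizer embeds in $G/F$. For the ``if'' direction, I would again seek a uniform finite-index torsion-free subgroup. Because the vertex orders are bounded by some $m$, I would choose a finite set $X$ of size $m!$ and let every vertex group act freely on $X$. The gluing argument above then works along a maximal tree and the stable letters, even for an infinite graph, giving $\varphi:G\to\mathrm{Sym}(X)$ injective on vertex groups with free kernel. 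I expect the main obstacle to be the ``only if'' direction of (b), which requires the full machinery of Dunwoody's theory for non-finitely-generated groups; I would simply invoke \cite[IV.1.6]{DD} there.
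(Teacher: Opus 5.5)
Your proposal is correct. Note that the paper does not prove this theorem: it quotes it from \cite{KPS} and \cite{DD}, so the only meaningful comparison is with those sources.

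Your ``if'' directions are the standard argument: a permutation action on a finite set that is free on every vertex group, glued along a maximal tree and extended to the stable letters, followed by the free-kernel step. That last step is exactly \Cref{lem:free_kernel}. The uniform bound $m$ and the choice $|X|=m!$ are what make the gluing work for an infinite graph in (b).

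For the ``only if'' direction of (a) you go through Dunwoody's accessibility of finitely presented groups \cite{Dunw-acc}. By contrast, \cite{KPS} predates that theorem and builds only on Stallings' theory of ends \cite{Stal}. Your route is shorter on the page, but it invokes a much deeper result than the virtually free case needs.

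One attribution is off. The fact that a finitely generated virtually free group with at most one end is finite does not come from Stallings' theorem. It follows from two facts: the number of ends is unchanged on passing to a finite index subgroup, and a free group of rank $0$, $1$ or at least $2$ has $0$, $2$ or infinitely many ends respectively.

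For the ``only if'' direction of (b), your route is valid: $\mathrm{cd}_{\Q}G\le 1$ by transfer from the finite index free subgroup, then Dunwoody's characterization of groups of rational cohomological dimension at most one. Your bound on stabilizer orders via $G_v\cap F=\{1\}$, with $F$ normal, is also right. However, that characterization is of the same depth as \cite[IV.1.6]{DD}, and in \cite{DD} both come out of the almost stability theorem. So for this direction you end up citing rather than proving, just as the paper does.
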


\begin{lemma}\label{lem:free_kernel} Suppose that $G$ is the fundamental group of a graph of groups $(\cG, \Gamma)$ and $\rho:G \to A$ is a group homomorphism. If $\rho$ is injective on each vertex group $G_v$, $v \in V\Gamma$, then $\ker\rho$ is free.
\end{lemma}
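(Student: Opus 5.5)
The plan is to use Bass--Serre theory. Let $T$ be the Bass--Serre tree of $(\cG,\Gamma)$, on which $G$ acts without inversions. Vertex stabilizers are the conjugates $gG_vg^{-1}$, $g \in G$, $v \in V\Gamma$. Put $K=\ker\rho$ and restrict the action to $K$. Since $K \lhd G$, we get
\[
K \cap gG_vg^{-1} = g\,(K \cap G_v)\,g^{-1}.
\]
Injectivity of $\rho$ on $G_v$ gives $K\cap G_v=\{1\}$, so $K$ meets every vertex stabilizer trivially. Hence $K$ acts freely on the vertices of $T$, and therefore freely on the edges as well: an edge stabilizer fixes both endpoints, so it lies in a vertex stabilizer.

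A group acting freely and without inversions on a tree is free. This is the standard fact that $K \cong \pi_1(K\backslash T)$, the fundamental group of the quotient graph of groups. Here all vertex and edge groups of that quotient graph of groups are trivial, so it is the fundamental group of a graph, which is free. No finiteness of the graph or of the vertex groups is needed, so the argument covers arbitrary graphs of groups.

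The only points to check are that $K$ acts without inversions, which is inherited from $G$, and the normality step above. I expect no real obstacle; the one thing to cite carefully is the structure theorem identifying $K$ with $\pi_1$ of the quotient graph of groups (Serre, \emph{Trees}, Theorem I.4 / Dicks--Dunwoody).
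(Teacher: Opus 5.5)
Your proposal is correct and is essentially the paper's proof: the normal subgroup $\ker\rho$ meets every conjugate of every vertex group trivially, so it acts freely and without inversions on the Bass--Serre tree, and is therefore free. The paper cites this last step as Serre, \emph{Trees}, Theorem~4 in Section~I.3.3.
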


\begin{proof}
This is a well-known consequence of Bass-Serre theory. The group $G$ acts on the Bass-Serre tree $T$ corresponding to its given splitting and vertex stabilizers for this action are conjugates of the vertex groups $G_v$, $v \in V\Gamma$. Since $\ker\rho$ is a normal subgroup intersecting each $G_v$ trivially, we can conclude that $\ker\rho$ acts on $T$ freely, hence $\ker\rho$ is a free group (see \cite[Theorem~4 in Section ~I.3.3]{Serre}).
\end{proof}

\subsection{Embedding general virtually free groups in multiple special HNN-extensions} We start by showing that every virtually free group embeds in a multiple special HNN-extension of a finite group, see \Cref{lem:emb_in_multiple_special_HNN}. 
An analogue of this statement for virtually free pro-$p$ groups was proved by Zalesskii in \cite{Zal-pro-p}. 

Given a group $A$, we will say that a group $G$ is  \emph{free-by-$A$} if $G$ has a free normal subgroup $F \lhd G$ with $G/F \cong A$. 
\begin{lemma} \label{lem:emb_in_multiple_HNN} If $A$ is a finite group then every free-by-$A$ group $G$ embeds in the fundamental group $H$ of a graph of groups $(\mathcal{H},\Theta)$ such that the graph $\Theta$ has exactly one vertex with the corresponding vertex group $A$. Moreover, there is a retraction $\rho: H \to A$, and  
if $G$ is finitely generated then $\Theta$ is finite.
\end{lemma}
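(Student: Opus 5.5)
The plan is to take a graph-of-groups decomposition of $G$ with finite vertex groups, collapse all vertices onto a single vertex carrying $A$, and show that the induced map of fundamental groups is injective. Fix a free normal subgroup $F \lhd G$ and let $\pi: G \to A$ be the quotient map with kernel $F$.

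\emph{Step 1 (decomposition of $G$).} By \Cref{thm:KPS}(b), $G$ is the fundamental group of a graph of groups $(\cG,\Gamma)$ whose vertex groups are finite. If $G$ is finitely generated, \Cref{thm:KPS}(a) lets us take $\Gamma$ finite. Every finite subgroup of $G$ meets the free (hence torsion-free) group $F$ trivially. Therefore $\pi$ is injective on every vertex group $G_v$, and hence on every edge group $G_e$.

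\emph{Step 2 (the target graph of groups).} Choose a maximal tree $\Delta\subseteq\Gamma$ and the standard presentation of $G$. It has generators from the $G_v$ and stable letters $t_e$, one for each (geometric) edge $e$ of $\Gamma$, with $t_e=1$ for $e\in\Delta$. Its relations are
\[
t_e\,\alpha_e(c)\,t_e^{-1}=\omega_e(c), \qquad c\in G_e.
\]
Define $(\mathcal H,\Theta)$ as follows. The graph $\Theta$ has a single vertex with vertex group $A$, and one loop $e'$ for each edge $e$ of $\Gamma$, with edge group $G_e$ and attaching maps $\pi\circ\alpha_e$ and $\pi\circ\omega_e$ into $A$. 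These maps are injective by Step 1. The group $H$ is generated by $A$ and stable letters $s_e$ subject to
\[
s_e\,\pi(\alpha_e(c))\,s_e^{-1}=\pi(\omega_e(c)).
\]
Clearly $\Theta$ is finite when $\Gamma$ is.

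Define $\iota: G\to H$ by $\iota(g)=\pi(g)$ for $g\in G_v$ and $\iota(t_e)=s_e$. The relations inside each $G_v$ are preserved, and so are the edge relations. The relations $t_e=1$ for $e\in \Delta$ are a problem, since $s_e\neq 1$. I will fix this by modifying $\iota$ by vertex conjugations: choose elements $h_v\in H$ so that $h_v h_w^{-1}=s_e^{\pm1}$ along the tree edges, and send $g\in G_v$ to $h_v\pi(g)h_v^{-1}$. This is the usual description of a morphism of graphs of groups.

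\emph{Step 3 (the retraction).} Define $\rho: H\to A$ by $\rho|_A=\mathrm{id}$ and $\rho(s_e)=\pi(t_e)$. This is well defined because the edge relation in $G$ gives $\pi(\omega_e(c))=\pi(t_e)\pi(\alpha_e(c))\pi(t_e)^{-1}$. For tree edges $\pi(t_e)=1$, which forces $\alpha_e$ and $\omega_e$ to agree after $\pi$, as required.

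\emph{Step 4 (injectivity of $\iota$; the main obstacle).} The map $\Gamma\to\Theta$ sends distinct edges to distinct loops, and $\pi$ is injective on vertex and edge groups. So the morphism of graphs of groups is locally injective: at each vertex $v$, the cosets $G_v/\alpha_e(G_e)$ for the various edges $e$ at $v$ map injectively into the disjoint union of the corresponding coset sets $A/\pi(\alpha_e(G_e))$ in $\Theta$. The ends of a loop $e'$ are distinguished by their orientation, so an edge $e$ cannot collide with itself. By Bass's theorem that immersions of graphs of groups induce injections of fundamental groups, equivalently that the induced equivariant map of Bass--Serre trees is a local injection and hence injective, $\iota$ is injective.

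Alternatively, one can argue directly that reduced words in $G$ map to reduced words in $H$ (Britton's lemma). I expect the careful bookkeeping in this step, with the vertex conjugations $h_v$ from Step 2, to be the main technical point.
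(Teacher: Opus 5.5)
Your proof is correct, and it produces exactly the same graph of groups $(\mathcal H,\Theta)$ as the paper: one vertex carrying $A$, and one loop per edge of $\Gamma$ with edge group $G_e$ and attaching maps $\pi\circ\alpha_e$, $\pi\circ\omega_e$. Where you differ is in how you prove the embedding.

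The paper never has to prove injectivity of a map that is not an inclusion. It forms the cone over $\Gamma$ (called $\Delta$ in the paper, not your maximal tree). The cone has apex $u$ with $G_u=A$, and edges $e_v$ with edge group $G_v$, attached to $A$ via $\varphi|_{G_v}$. Then $(\cG,\Gamma)$ is a sub-graph of groups of the cone, so $G$ embeds in its fundamental group by the standard subgraph-embedding fact from Serre. The retraction is read off a presentation, and contracting the maximal tree $\{e_v\}_v$ gives the one-vertex description.

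You instead build the one-vertex graph of groups directly. The cost is Bass's theorem that immersions of graphs of groups induce injections (or a Britton-lemma normal-form check), plus twisting elements $h_v$ just to define $\iota$. Your local-injectivity check is right: distinct oriented edges at $v$ go to distinct oriented loops, and $\pi|_{G_v}$ injective gives injectivity on cosets. Your retraction $\rho(s_e)=\pi(t_e)$ is also fine. The cone trick is more elementary and self-contained. Your route is more direct and shows plainly that injectivity of $\pi$ on vertex groups is exactly what is needed.

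There is a slip in the bookkeeping of Step 2:
\begin{itemize}
\item With $g\mapsto h_v\pi(g)h_v^{-1}$ for $g\in G_v$, the condition along a tree edge $e$ from $v$ to $w$ must be $h_w^{-1}h_v=s_e$, i.e.\ $h_v=h_ws_e$. Your condition $h_vh_w^{-1}=s_e^{\pm1}$ corresponds to conjugating the other way.
\item Every stable letter $t_e$, not only the tree ones, must be sent to $h_ws_eh_v^{-1}$ rather than to $s_e$.
\end{itemize}
You can avoid the twisting altogether by using Serre's path-group definition of $\pi_1$. There, $g\mapsto \pi(g)$ and $t_e\mapsto s_e$ is already a well-defined homomorphism of path groups, since there are no tree relations. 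Restricting it to loops at $v_0$ gives $\iota$ into $H$, which for a one-vertex graph is the whole path group.
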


\begin{proof} By \Cref{thm:KPS}, $G$ is the fundamental group of a graph of groups $(\cG, \Gamma)$, where $\Gamma$ is a connected graph and all vertex groups $\{G_v\}_{v \in V\Gamma}$ are finite; moreover if $G$ is finitely generated then we can choose $\Gamma$ to be finite. Let $\varphi:G \to A$ be a  homomorphism with  free kernel. Free groups are torsion-free, hence $\varphi$ is injective on each $G_v$, $v \in V\Gamma$. Let $\Delta$ be a new graph obtained from $\Gamma$ by adding a single vertex $u$ and an edge $e_v$ from $u$ to $v$, for each $v \in V\Gamma$ (in other words, $\Delta$ is a cone over $\Gamma$ with apex $u$). We set $G_u=A$, and for each of the newly added edges $e_v$, we let $G_{e_v}$ be $G_v$, whose embedding into the vertex group $G_v$ is the identity map and the embedding into $G_u$ is given by $\varphi|_{G_v}$. Obviously, $G$ embeds in the fundamental group $H$ of this new graph of groups. After fixing any vertex $v_0 \in V\Gamma$ and collapsing all of the graph $\Gamma$ to it, we see (cf. \cite[Lemma~6 in Section~I.5.2 and Section~I.5.1]{Serre}) that  $H$ has the relative presentation
\[H=\left\langle G,A,\{t_v\}_{v \in V\Gamma} \,\middle\vert\,  \bigcup_{v \in V\Gamma}\{t_v g t_v^{-1} =\varphi(g)\}_{g \in G_{v}},~t_{v_0}=1 \right\rangle.\]
This presentation immediately shows that there is a retraction $\rho:H \to A$ such that $\rho|_{G}=\varphi$, $\rho|_A=\mathrm{Id}_A$ and $\rho(t_v)=1$, for all $v \in V\Gamma$.

Clearly, the union of the edges $e_v$, $v \in V\Gamma$, is a maximal tree in $\Delta$ and the subgroup of $H$, generated by the vertex groups for vertices in this maximal tree, is $A$, by construction.
Contracting this maximal tree to the vertex $u$ will result in a graph of groups $(\mathcal{H},\Theta)$, where the underlying graph $\Theta$ has a single vertex $u$, with $H_u=A$, and all edges are loops at $u$. As before, the fundamental group of $(\mathcal{H},\Theta)$ is isomorphic to $H$. Moreover, if $G$ if finitely  generated then $\Theta$ is a finite graph, by construction.
\end{proof}

Recall that a \emph{multiple special HNN-extension} of a group $A$ is a group $G$ with presentation
\begin{equation}\label{eq:multiple_HNN}
G=\langle A,\{t_i\}_{i \in I} \mid \{ t_ibt_i^{-1}=b,~\text{ for all } b \in B_i \}_{i\in I} \rangle,
\end{equation}
where $I$ is an index set, $\{B_i\}_{i \in I}$ is a family of subgroups  of $A$, and $\{t_i\}_{i \in I}$ are the \emph{stable letters} of this HNN-extension. If $|I|=1$ then we will say that $G$ is a \emph{single special HNN-extension} of $A$.

\begin{lemma}\label{lem:emb_in_multiple_special_HNN} If $A$ is a finite group then every free-by-$A$ group $G$ embeds in a multiple special HNN-extension $H$ of  $A$. Moreover, if $G$ is finitely generated then $H$ has finitely many stable letters.
\end{lemma}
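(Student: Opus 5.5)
The plan is to start from \Cref{lem:emb_in_multiple_HNN} and show that the group $H$ it produces is already a multiple special HNN-extension of $A$, after a change of stable letters. By that lemma, $G$ embeds in the fundamental group $H$ of a graph of groups $(\mathcal{H},\Theta)$ with a single vertex $u$, where $H_u=A$. There is also a retraction $\rho:H\to A$, and $\Theta$ is finite when $G$ is finitely generated.

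Since all edges of $\Theta$ are loops at $u$, the maximal tree is just $\{u\}$. So $H$ has the standard presentation
\[H=\left\langle A,\{t_e\}_{e\in E} \,\middle\vert\, \{t_e\,\alpha_e(c)\,t_e^{-1}=\omega_e(c)\}_{c\in H_e},~e\in E\right\rangle,\]
where $E$ is a set of geometric edges of $\Theta$ (one from each pair $e,\bar e$), and $\alpha_e,\omega_e:H_e\to A$ are the two edge monomorphisms.

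Next, apply the retraction $\rho$ to the defining relations. Because $\rho|_A=\mathrm{Id}_A$, we get $a_e\,\alpha_e(c)\,a_e^{-1}=\omega_e(c)$ for all $c\in H_e$, where $a_e:=\rho(t_e)\in A$. Thus $\omega_e=\iota_{a_e}\circ\alpha_e$, with $\iota_{a_e}$ the inner automorphism of $A$ given by $a_e$. Now set $s_e:=a_e^{-1}t_e$. The relations become $s_e\,\alpha_e(c)\,s_e^{-1}=a_e^{-1}\omega_e(c)a_e=\alpha_e(c)$.

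Since $t_e=a_es_e$ with $a_e\in A$, replacing each $t_e$ by $s_e$ is a Tietze transformation. This gives
\[H\cong\left\langle A,\{s_e\}_{e\in E}\,\middle\vert\, \{s_ebs_e^{-1}=b\}_{b\in B_e},~e\in E\right\rangle,\qquad B_e:=\alpha_e(H_e)\leqslant A,\]
which is a multiple special HNN-extension of $A$. The stable letters are indexed by the edges of $\Theta$, so there are finitely many of them whenever $\Theta$ is finite, i.e., whenever $G$ is finitely generated.

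The only potential obstacle is turning the twisting isomorphisms $\omega_e\circ\alpha_e^{-1}$ into the identity. This is handled entirely by the retraction $\rho$ from \Cref{lem:emb_in_multiple_HNN}, which shows each twist is inner in $A$. That is exactly why the lemma was formulated with a retraction onto the vertex group $A$. Everything else is routine bookkeeping with presentations.
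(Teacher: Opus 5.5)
Your proposal is correct and follows the paper's proof essentially verbatim. You take the one-vertex graph of groups from \Cref{lem:emb_in_multiple_HNN} and apply the retraction $\rho$ to the defining relations, which shows each twist is conjugation by $\rho(t_e)\in A$. You then replace $t_e$ by $\rho(t_e)^{-1}t_e$ via a Tietze transformation to obtain the multiple special HNN-extension, with finitely many stable letters when $\Theta$ is finite.
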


\begin{proof} In view of \Cref{lem:emb_in_multiple_HNN}, we know that $G$ embeds in the group  $H$ with presentation
\begin{equation} \label{eq:pres_of_H}
H=\langle A,\{t_i\}_{i \in I} \mid \{ t_ibt_i^{-1}=\beta_i(b),~\text{ for all } b \in B_i \}_{i\in I} \rangle,
\end{equation}
where $I$ is an index set, $\{B_i\}_{i \in I}$ is a family of subgroups of $A$ and $\beta_i:B_i \to A$ are monomorphisms, $i \in I$. Moreover, there is a retraction $\rho:H \to A$. Denote $s_i=\rho(t_i) \in A$, for $i \in I$.  Then for each $i \in I$ and all $b \in B_i$ we have $\beta_i(b)=\rho(\beta_i(b))=s_ibs_i^{-1}$ in $A$. Hence, after setting $r_i=s_i^{-1}t_i$, $i \in I$, we can use Tietze transformations to re-write the presentation  \eqref{eq:pres_of_H} as 
\[H=\langle A,\{r_i\}_{i \in I} \mid \{ r_ibr_i^{-1}=b,~\text{ for all } b \in B_i \}_{i \in I}\rangle. \]
Thus, $H$ is a multiple special HNN-extension of the finite group $A$. Moreover, if $G$ is finitely generated then $|I|<\infty$, by  \Cref{lem:emb_in_multiple_HNN}.
\end{proof}

\subsection{Embedding finitely generated virtually free groups in doubles of finite groups} We now aim to improve the statement of \Cref{lem:emb_in_multiple_special_HNN} in the case when the virtually free group is finitely generated, see \Cref{prop:emb_in_single_spec_HNN}. To this end, the following observation will be important.
\begin{lemma}\label{lem:gen_set_for_kernel} Let $G$ be the multiple special HNN-extension \eqref{eq:multiple_HNN}, and let $N$ be the normal closure of the stable letters $\{t_i\}_{i \in I}$ in $G$. Then $N$ is free and $G \cong N \rtimes A$; in particular, $G/N \cong A$. Moreover, $N$ is generated by the set 
\begin{equation}\label{eq:set_U}
U=\bigcup_{i \in I}\{dt_id^{-1} \mid d \in D_i\},
\end{equation}
where $D_i$ is a left transversal for $B_i$ in $A$, $i\in I$.
\end{lemma}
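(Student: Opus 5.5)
The plan is to build the retraction onto $A$, read off freeness of its kernel from \Cref{lem:free_kernel}, and then show separately that $\langle U\rangle$ is normal.

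\textbf{Step 1: the retraction.} Sending $t_i \mapsto 1$ for all $i \in I$ and $a \mapsto a$ for $a \in A$ respects every defining relation $t_ibt_i^{-1}=b$ of \eqref{eq:multiple_HNN}. So it defines a homomorphism $\pi:G \to A$ that is the identity on $A$.

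\textbf{Step 2: the kernel is $N$.} Clearly $N \subseteq \ker\pi$. Conversely, every element of $G$ can be written as a word in $A \cup \{t_i^{\pm1}\}$. Modulo $N$ each $t_i$ becomes trivial, so $G=NA$. If $g=na \in \ker\pi$ with $n \in N$ and $a \in A$, then $1=\pi(g)=a$, so $g \in N$. Hence $N=\ker\pi$, and since $\pi$ restricts to the identity on $A$, we get $G=N\rtimes A$ and $G/N\cong A$.

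\textbf{Step 3: $N$ is free.} The group $G$ is the fundamental group of a graph of groups with one vertex, vertex group $A$, and a loop for each $i \in I$. The map $\pi$ is injective on the vertex group $A$, so \Cref{lem:free_kernel} shows that $N=\ker\pi$ is free.

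\textbf{Step 4: $N=\langle U\rangle$.} Set $M=\langle U\rangle \subseteq N$. Since $M$ contains every $t_i$ (taking $d \in D_i\cap B_i$ if we choose $1$ as the representative of $B_i$; in general $dt_id^{-1}$ with $d\in B_i$ equals $t_i$ by the relation), it suffices to show that $M$ is normal in $G$. Then $M \supseteq N$, because $N$ is the normal closure of the $t_i$.

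Since $G$ is generated by $A$ and the $t_j$, and each $t_j \in M$, conjugation by $t_j^{\pm1}$ preserves $M$. It therefore suffices to show $aMa^{-1}\subseteq M$ for every $a\in A$. Take $a \in A$ and $d \in D_i$. Write $ad=d'b$ with $d'\in D_i$ and $b\in B_i$. Then
\[ a(dt_id^{-1})a^{-1}=d'bt_ib^{-1}d'^{-1}=d't_id'^{-1}\in U, \]
using that $b$ commutes with $t_i$. So $aUa^{-1}=U$, and hence $aMa^{-1}=M$. Thus $M$ is normal, and $M=N$.

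The main subtlety is this transversal computation in Step 4 and checking that $M$ contains the $t_i$ themselves. Every other step is a direct application of earlier results.
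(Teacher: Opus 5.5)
Your proof is correct and follows essentially the same route as the paper. Both use the retraction killing the stable letters, get freeness of its kernel from \Cref{lem:free_kernel}, and use that $t_i$ centralizes $B_i$ to reduce the conjugates $at_ia^{-1}$, $a\in A$, to those with $a\in D_i$; you just spell out what the paper calls easy, namely that $\ker\pi=N$ and the coset computation $ad=d'b$ showing $\langle U\rangle$ is normal.
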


\begin{proof} As evident from \eqref{eq:multiple_HNN}, there is a retraction $\rho:G \to A$ such that $\rho(t_i)=1$, for all $i\in I$. Thus $N=\ker\rho$ and $G \cong N \rtimes A$. From \Cref{lem:free_kernel}, we also know that  $N$ is a free group. 

Finally, since $N$ is the normal closure of the elements $\{t_i\}_{i\in I}$ in $G$ and $A$ is a subgroup, it is easy to see that $N$ is generated by the subset $\bigcup_{i\in I}\{a t_i a^{-1} \mid a \in A\}$, which is the set $U$ from the statement of the lemma because for each $i\in I$, $t_i$ centralizes $B_i$ in $G$.
\end{proof}

\begin{rem}
The subset $U$ in \Cref{lem:gen_set_for_kernel} is in fact a free generating set of $N$. In the case when $|I|<\infty$, 
this can be seen by observing that the rank of $N$ is $\sum_{i \in I} |A|/|B_i|=\sum_{i \in I} |D_i|$, by \cite[Theorem~2]{KPS} (see also the Euler characteristic \cite[Section II.2.9]{Serre}).
When $|I|=\infty$, one can show this by adopting the argument from    the proof of \Cref{prop:emb_in_single_spec_HNN} below.
\end{rem}
%
Given a prime $p \in \N$ and a finite group $A$, we will say that a finite group $C$ is \emph{$p$-by-$A$} if there is a normal $p$-subgroup $N \lhd C$ such that $C/N \cong A$.

\begin{lemma}\label{lem:virt_free->rf} Let $G$ be a free-by-$A$ group, for some finite group $A$. Then for every prime $p \in \N$ and any finite subset $S \subseteq G$ there is a finite $p$-by-$A$ group $C$ and a homomorphism $\varphi:G \to C$ such that $\varphi$ is injective on $S$.
\end{lemma}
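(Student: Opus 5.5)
Let $F \lhd G$ be a free normal subgroup with $G/F \cong A$, and let $\pi:G \to A$ be the quotient map. The plan is to use the standard fact that free groups are residually $p$ for every prime $p$ (Iwasawa/Magnus), and to pass from this to a characteristic subgroup so that the quotient keeps the structure $p$-by-$A$.

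The first step is to choose a normal subgroup of $F$ that separates the relevant elements. Consider the finite set $S' = \{x^{-1}y \mid x,y \in S,\ x\neq y\} \cap F$. Since $F$ is residually a finite $p$-group, for each $g \in S'$ there is a normal subgroup $M_g \lhd F$ with $F/M_g$ a finite $p$-group and $g \notin M_g$. Let $M = \bigcap_{g \in S'} M_g$. Then $F/M$ embeds in a finite direct product of $p$-groups, so it is a finite $p$-group, and $M \cap S' = \emptyset$.

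The second step is to make the subgroup normal in $G$. $F$ may be infinitely generated, since $G$ is not assumed finitely generated, so I will not rely on characteristic subgroups. Instead, take $K = \bigcap_{a \in T} aMa^{-1}$, where $T$ is a finite transversal of $F$ in $G$. Each conjugate $aMa^{-1}$ is a normal subgroup of $F$ (as $F \lhd G$) of $p$-power index, and conjugating by elements of $F$ fixes $M$ because $M \lhd F$. Hence $K$ is normalized by all of $G = TF$. Moreover, $F/K$ embeds in $\prod_{a \in T} F/aMa^{-1}$, so $F/K$ is a finite $p$-group. Also $K \subseteq M$, so $K$ misses $S'$. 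This step, normality in $G$ together with the bounded $p$-power index, is the only mildly delicate point, and finiteness of $T$ handles it.

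Finally, set $C = G/K$ and let $\varphi$ be the quotient map. $C$ is finite, since $|C| = |A|\,|F/K|$, and $F/K \lhd C$ is a $p$-group with $C/(F/K) \cong G/F \cong A$, so $C$ is $p$-by-$A$. It remains to check injectivity on $S$. Take $x \neq y$ in $S$. If $\pi(x) \neq \pi(y)$, then $\varphi(x) \neq \varphi(y)$, because $\pi$ factors through $\varphi$. Otherwise $x^{-1}y \in F$, so $x^{-1}y \in S'$, hence $x^{-1}y \notin K$ and again $\varphi(x) \neq \varphi(y)$.
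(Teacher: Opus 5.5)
Your proof is correct and follows essentially the same route as the paper. Both arguments separate the finitely many relevant elements of $F$ by a normal subgroup of $p$-power index, using residual $p$-finiteness of free groups, and then intersect its finitely many $G$-conjugates over a transversal of $F$ to obtain a normal subgroup of $G$ whose quotient $F/K$ is a $p$-group; your explicit case split on whether $\pi(x)=\pi(y)$ just spells out what the paper compresses into the condition $K \cap S^{-1}S=\{1\}$.
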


\begin{proof}
By the assumptions, there is a normal free subgroup $F \lhd G$ such that $G/F \cong A$. Since free groups are residually $p$-finite (see \cite[Theorem~6]{Takahasi}), there exists a normal subgroup $K \lhd F$ such that $F/K$ is a finite $p$-group and $K \cap S^{-1}S=\{1\}$ in $G$ (without loss of generality, we assume that $S \neq\emptyset$). 

Choose any transversal $g_1,\dots,g_n \in G$ for $F$ in $G$, where $n=|A|$. Then $N=\bigcap_{i=1}^n g_i K g_i^{-1}$ is a normal subgroup of $G$, and since $F/N$ embeds in the direct power $(F/K)^n$, it is a finite $p$-group. 
Clearly, $N \cap S^{-1}S=\{1\}$, so the quotient map $\varphi: G \to G/N$ is injective on $S$. Moreover, $C=G/N$ is a $p$-by-$A$ group, as it is an extension of $F/N$ by $G/F \cong A$.
\end{proof}

The next statement is an easy consequence of the ping-pong lemma. It will be very useful for simplifying the technical arguments when dealing with normal forms.
\begin{lemma}[{\cite[Lemma~2.8]{Min-Val}}]\label{lem:ping-pong}
Let $F$ be a free group and let $f_1,\dots, f_l \in F\setminus\{1\}$. If $\langle f_i \rangle \cap \langle f_j \rangle=\{1\}$, for all $i \neq j$, then there is $n \in \N$ such that
the elements $f_1^n,\dots,f_l^n$ freely generate a free subgroup of $F$ of rank $l$. 
\end{lemma}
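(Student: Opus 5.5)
The plan is to use the standard ping-pong argument on the boundary of $F$, after reducing to the case where each $f_i$ is a \emph{primitive} (not a proper power) cyclically reduced element. Each $f_i \neq 1$ lies in a unique maximal cyclic subgroup $\langle g_i\rangle$, since centralizers of nontrivial elements in a free group are infinite cyclic. Write $f_i = g_i^{k_i}$. The hypothesis $\langle f_i\rangle \cap \langle f_j\rangle = \{1\}$ for $i \neq j$ forces $\langle g_i\rangle \neq \langle g_j\rangle$. Otherwise $f_i$ and $f_j$ would be nontrivial powers of a common element $g$, and then $f_i^{k_j}$ and $f_j^{k_i}$ would both equal $g^{\pm k_ik_j}$ up to sign, so the two cyclic subgroups would meet nontrivially. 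It then suffices to find $m$ such that $g_1^m,\dots,g_l^m$ freely generate a free group of rank $l$, and to take $n = m\cdot\prod_i k_i$. Indeed, sufficiently high powers of a free basis still form a free basis of the subgroup they generate. More simply, the ping-pong argument below applies to all large exponents at once, so no such bookkeeping is really needed.

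Next, I would let $F$ act on its Cayley tree $T$ with respect to a free basis. Each $g_i$ acts as a hyperbolic isometry with axis $L_i$ and two distinct endpoints $\xi_i^{\pm} \in \partial T$. The key claim is that the $2l$ points $\xi_i^\pm$ are pairwise distinct. Suppose $g_i$ and $g_j$ shared a fixed endpoint. Then their axes would share a ray, and a standard argument shows that the commutator $[g_i^a, g_j^b]$ fixes a long segment pointwise for suitable $a,b$. Since $F$ acts freely on $T$, this commutator is trivial, so $g_i$ and $g_j$ commute. Commuting elements of a free group generate a cyclic subgroup, which would give $\langle g_i\rangle=\langle g_j\rangle$ by maximality, a contradiction. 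Alternatively, I would avoid trees and argue purely combinatorially: $g_i^{N}$ and $g_j^{N}$ having reduced forms that agree on a long prefix forces a common root, by the Lyndon--Schützenberger theorem.

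With distinct endpoints in hand, I would choose pairwise disjoint clopen neighbourhoods $U_i^\pm$ of $\xi_i^\pm$ in $\partial T$. By north--south dynamics of hyperbolic isometries of trees, there is $n$ such that for every $i$, $g_i^{\pm n}$ maps the complement of $U_i^\mp$ into $U_i^\pm$. The ping-pong lemma with the sets $X_i = U_i^+\cup U_i^-$ then shows that $g_1^n,\dots,g_l^n$ freely generate a free group of rank $l$. This argument needs $l\ge 2$, or a nonempty region outside all the $X_i$, which is arranged by shrinking the neighbourhoods; the case $l=1$ is trivial. Passing back to $f_i^n = g_i^{k_in}$ only requires exponents that are multiples of $n$ and large, and the north--south estimate holds for all exponents $\ge n$. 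So replacing $n$ by $n\prod k_i$ finishes the proof.

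I expect the main obstacle to be the distinctness of the endpoints, that is, showing that distinct maximal cyclic subgroups have disjoint pairs of fixed points in $\partial T$. Everything else is routine ping-pong.
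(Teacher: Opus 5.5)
Your argument is correct and is essentially the intended one: the paper gives no proof of its own, only citing the lemma and calling it an easy consequence of the ping-pong lemma, and your route (distinct endpoints of the axes in $\partial T$, north--south dynamics, then ping-pong with the sets $U_i^+\cup U_i^-$) is exactly that. The detour through the roots $g_i$ can be dropped, since $f_i$ has the same axis and endpoints as its root and the ping-pong applies directly to the $f_i^n$ for $n$ large; the cyclically reduced normalisation is likewise never used.
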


\begin{prop}\label{prop:emb_in_single_spec_HNN} Let $A$ be a finite group and let $p \in \N$ be any prime. Every finitely generated free-by-$A$ group $G$ embeds in a single special HNN-extension of a
finite $p$-by-$A$ group.
\end{prop}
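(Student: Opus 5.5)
We plan to deduce this from \Cref{lem:emb_in_multiple_special_HNN} and \Cref{lem:virt_free->rf}. By \Cref{lem:emb_in_multiple_special_HNN}, $G$ embeds in a multiple special HNN-extension $H=\langle A,t_1,\dots,t_k \mid t_ibt_i^{-1}=b,~b\in B_i\rangle$ with finitely many stable letters. It therefore suffices to embed $H$ in a single special HNN-extension of a finite $p$-by-$A$ group. By \Cref{lem:gen_set_for_kernel}, $H=N\rtimes A$, where $N$ is free and generated by $U=\{dt_id^{-1} \mid d\in D_i,~1\le i\le k\}$. The Remark after that lemma shows $U$ is in fact a free basis of $N$, since $k<\infty$. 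In particular, $H$ is a finitely generated free-by-$A$ group.

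\textbf{Step 1: choosing the finite quotient.} Consider the finite set $S=A\cup\{dt_i \mid d\in D_i,~1\le i\le k\}\subseteq H$. We apply \Cref{lem:virt_free->rf} to $H$ and $S$. It gives a finite $p$-by-$A$ group $C$ and a homomorphism $\varphi:H\to C$ injective on $S$. We will in fact want $\varphi$ to separate the elements $c_{i,d}=dt_i$ modulo $A$. That is, whenever $(i,d)\ne(j,d')$ we want $\varphi(c_{i,d})^{-1}\varphi(c_{j,d'})\notin\varphi(A)$. To arrange this, enlarge $S$ to include the products $c_{i,d}^{-1}c_{j,d'}a$ for $a\in A$, so that injectivity on $S$ gives what we need.

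This works because $c_{i,d}^{-1}c_{j,d'}=t_i^{-1}d^{-1}d't_j\notin A$ in $H$. For $i\ne j$ this follows from the normal form. For $i=j$ and $d\neq d'$ we have $d^{-1}d'\notin B_i$, and Britton's lemma applies. Identify $A$ with $\varphi(A)\le C$ and write $\bar t_i=\varphi(t_i)$. Then $\bar t_i$ centralizes $B_i$ in $C$.

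\textbf{Step 2: the target and the map.} Let $B=\langle B_1,\dots,B_k\rangle\le A\le C$, and set $L=\langle C,s\mid sbs^{-1}=b,~b\in B\rangle$. Define $\psi:H\to L$ by $\psi|_A=\mathrm{Id}$ and $\psi(t_i)=\bar t_i s\bar t_i^{-1}$. This element centralizes $\bar t_iB\bar t_i^{-1}$, which contains $\bar t_iB_i\bar t_i^{-1}=B_i$. Hence the defining relations of $H$ are respected and $\psi$ is well defined.

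\textbf{Step 3: injectivity (the main point).} Let $\pi:L\to C$ be the retraction killing $s$. Its kernel $M$ is free, by \Cref{lem:free_kernel}. Arguing as in \Cref{lem:gen_set_for_kernel} and the Remark after it, $M$ is freely generated by the conjugates $csc^{-1}$ as $c$ runs over a left transversal of $B$ in $C$.

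Now $\psi(dt_id^{-1})=\varphi(c_{i,d})\,s\,\varphi(c_{i,d})^{-1}$. By Step 1 the elements $\varphi(c_{i,d})$ lie in pairwise distinct left cosets of $A$, hence of $B\subseteq A$. So we may include them in a transversal, and then $\psi(U)$ is part of a free basis of $M$. Hence $\psi|_N$ is injective with image in $M$.

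Also $\pi\circ\psi$ restricted to $A$ is the identity, and $\psi(N)\le M=\ker\pi$. So if $\psi(na)=1$ with $n\in N$ and $a\in A$, then applying $\pi$ gives $a=1$, and then $n=1$. Thus $\psi$ is injective.

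The step needing the most care is Step 3, together with choosing $S$ in Step 1 so that the coset-distinctness holds. I expect the Britton's lemma check and the free basis description of $M$ to be routine. If the free basis claim proved awkward, one could instead use $\psi(t_i)=\bar t_is^n\bar t_i^{-1}$ and invoke \Cref{lem:ping-pong}.
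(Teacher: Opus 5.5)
Your proof is correct, but it finishes differently from the paper's. The reduction, the separating set $S$ and the key computation are essentially the paper's. The paper also shows, using Britton's lemma in the multiple HNN-extension and the quotient by the normal closure of $A$, that the elements $\varphi(dt_i)$ lie in pairwise distinct left cosets of $A$ in $C$; this is the content of \eqref{eq:el-t_is_in_A}--\eqref{eq:equation_in_G-1}. The paper takes the associated subgroup of the target to be all of $A$ rather than your $B=\langle B_1,\dots,B_k\rangle$, which is immaterial.

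The real difference is the last step. The paper does not invoke the free-basis description of the kernel. It combines the coset information with Britton's lemma in the target to show $\langle f_1\rangle\cap\langle f_2\rangle=\{1\}$ for distinct $f_1,f_2\in V$. It then applies the ping-pong \Cref{lem:ping-pong} and sets $\psi(t_i)=h_i^n$.

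You instead take $n=1$. You conclude directly because conjugates of the stable letter by a full left transversal of $B$ form a free basis of $M=\ker(L\to C)$, and one may choose the transversal to contain your elements $\varphi(dt_i)$. This is valid. Since $|C|<\infty$, $M$ has rank $|C:B|$: this follows from the Euler characteristic, or from the quotient of the Bass--Serre tree by the free $M$-action being a bouquet of $|C:B|$ loops. $M$ is generated by $|C:B|$ such conjugates for any transversal, and finitely generated free groups are Hopfian, so these conjugates form a free basis.

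Your route is shorter and gives an explicit embedding without passing to powers. The paper's route avoids relying on this rank count, which it only sketches in the Remark with a citation to KPS. It also proves as a by-product that $U$ freely generates $N$. Indeed, the Remark says its infinite-index-set case is to be obtained by adapting exactly this ping-pong argument.
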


\begin{proof} According to \Cref{lem:emb_in_multiple_special_HNN} and \Cref{thm:KPS}.(a), we can assume that
\begin{equation}\label{eq:multiple_HNN-fin}
G=\langle A,t_1,\dots,t_k \mid \{ t_ibt_i^{-1}=b,~\text{ for all } b \in B_i \}_{i=1}^k \rangle,
\end{equation}
where $B_i \leqslant A$, $i=1,\dots,k$, are some subgroups. Define a finite subset $S \subseteq G$ by 
\[S=\{t_i^{-1} a t_j \mid i,j \in \{1,\dots,k\},~a \in A \} \cup A.\]
By \Cref{lem:virt_free->rf}, there exist a finite $p$-by-$A$ group $C$ and a homomorphism $\varphi:G \to C$ such that $\varphi$ is injective on $S$. Abusing the notation, we will identify $A$ and $B_1,\dots,B_k$ with their $\varphi$-images in $C$. We will show that $G$ embeds in the single special HNN-extension 
\[H=\langle C, t \mid tat^{-1}=a,~\text{ for all } a \in A\rangle.\]

For each $i=1,\dots,k$, denote $s_i=\varphi(t_i) \in C$. Consider the elements $h_i=s_its_i^{-1} \in H$, $i=1,\dots,k$, and let $\rho:H \to C$ be the natural retraction whose kernel $F$ is the normal closure of $t$ in $H$. Then $h_1,\dots,h_k \in F$ and $F$ is a free subgroup of $H$ (see \Cref{lem:gen_set_for_kernel}). 
Let $D_i$ be a left transversal for $B_i$ in $A$, $i=1,\dots,k$, and consider the finite subset $V$ of $F$, defined by 
\[V=\bigcup_{i=1}^k\{d h_i d^{-1} \mid d \in D_i\}.\]

We will aim to apply \Cref{lem:ping-pong} to the subset $V$ to show that there is $n \in \N$ such that the subset $W=\{f^n \mid f \in V\} \subseteq F$ freely generates a free subgroup of $F$. To this end, consider two elements $f_1=d_1 h_i d_1^{-1}$ and $f_2=d_2 h_j d_2^{-1}$ in $V$ such that $d_1 \in D_i$, $d_2 \in D_j$ and either $i \neq j$ or $d_1 \neq d_2$. Note that both of these elements have infinite order, being conjugates of $t$ in $H$. Suppose that $f_1^m=f_2^q$, for some $m \in \N$ and $q \in \Z\setminus\{0\}$. Then in $H$ we have the equation
\begin{equation}\label{eq:equation_in_H-1}
1=f_1^m f_2^{-q}=d_1 h_i^m d_1^{-1}d_2 h_j^{-q} d_2^{-1}=d_1 s_i t^{m}s_i^{-1} d_1^{-1}d_2 s_jt^{-q}s_j^{-1} d_2^{-1}.  
\end{equation}
Since $H$ retracts onto its infinite cyclic subgroup $\langle t \rangle$ (with the retraction sending all of $C$ to $1$), we must have $m-q=0$, so $q=m \in \N$ and \eqref{eq:equation_in_H-1} becomes
\begin{equation}\label{eq:equation_in_H-2}
d_1 s_i t^{m} s_i^{-1} d_1^{-1}d_2 s_jt^{-m} s_j^{-1} d_2^{-1}=1~\text{ in } H.  
\end{equation}
Now, by Britton's Lemma for HNN-extensions \cite[Section~IV.2]{LyndonSchupp}, we  have 
\begin{equation}\label{eq:el-t_is_in_A}
s_i^{-1} d_1^{-1}d_2 s_j \in A~\text{ in } C.
\end{equation}
Recall that we have identified $A$, $d_1$ and $d_2$ with their $\varphi$-images in $C$. Thus $s_i^{-1} d_1^{-1}d_2 s_j=\varphi(t_i^{-1} d_1^{-1}d_2 t_j)$, and $t_i^{-1} d_1^{-1}d_2 t_j \in S$ because $d_1^{-1}d_2 \in A $. Since $\varphi$ is injective on $S$ and $A \subseteq S$, \eqref{eq:el-t_is_in_A} implies that $t_i^{-1} d_1^{-1}d_2 t_j\in A$ in $G$, i.e., 
\begin{equation}\label{eq:equation_in_G-1}
t_i^{-1} d_1^{-1}d_2 t_ja^{-1}=1 ~\text{ in } G, \text{ for some } a \in A.
\end{equation}

Observe, from \eqref{eq:multiple_HNN-fin}, that by taking the quotient of $G$ by the normal closure of $A$ we obtain the free group of rank $k$, freely generated by $t_1,\dots,t_k$. Therefore, in view of \eqref{eq:equation_in_G-1}, we must have $i=j$. Once again, we can apply Britton's lemma (but now in the multiple HNN-extension $G$), to conclude that $d_1^{-1}d_2 \in B_i$. Since $d_1,d_2 \in D_i$, the latter implies that $d_1=d_2$, contradicting our assumption. Thus, we have shown that $\langle f_1 \rangle \cap \langle f_2 \rangle=\{1\}$, for any two distinct elements $f_1,f_2 \in V$. Therefore, we can apply \Cref{lem:ping-pong} to find $n \in \N$ such that the subset $W=\{f^n \mid f \in V\}$ freely generates a free subgroup $M$ of $F$, of rank $|W|=\sum_{i=1}^k |D_i|=\sum_{i=1}^k |A:B_i|$.

Observe that for each $i=1,\dots,k$, the element $h_i$ centralizes $B_i$ in $H$ because $t$ centralizes all of $A$ and $s_i$ is the $\varphi$-image of $t_i$ which centralizes $B_i$ in $G$. Therefore, there is a group homomorphism $\psi:G \to H$,  given by 
\[\psi(a)=\varphi(a), \text{ for all } a \in A,~ \text{ and }~ \psi(t_i)=h_i^n,\text{ for }i=1,\dots,k.\]
Let $N \lhd G$ be the normal closure of $\{t_1,\dots, t_k\}$. According to \Cref{lem:gen_set_for_kernel},  $N$ is  generated by the set $U$, given by \eqref{eq:set_U}. By construction, the homomorphism $\psi:G \to H$ sends $U$ bijectively onto the set $W$, which is a free generating set of $M=\langle W \rangle$. It follows that $N$ is free on $U$ and
the restriction of $\psi$ to $N$ is an isomorphism between $N$ and $M$. Thus, the restrictions of $\psi$ to $N$ and to $A$ are both injective. Since $G=NA$ and $\psi(A) \cap \psi(N) \subseteq C \cap F=\{1\}$, we can conclude that $\psi$ is injective. Hence, we have shown that $G$ embeds in $H$, and the proof is complete.
\end{proof}

In \cite[Embedding~Theorem]{Scott-emb} Scott proved that every countable virtually free group embeds in a finitely generated virtually free group. Therefore, \Cref{thm:main_emb}, stated in the Introduction, follows from the following result.

\begin{thm}\label{thm:spec_HNN_embeds_in_double} Suppose that $A$ is a finite group. Then for 
every finitely generated free-by-$A$ group $G$ and each prime $p \in \N$, $G$ embeds in a double of a finite $p$-by-$A$ group.
\end{thm}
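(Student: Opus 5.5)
By \Cref{prop:emb_in_single_spec_HNN}, $G$ embeds in a single special HNN-extension
\[H=\langle C, t \mid tat^{-1}=a,~\text{ for all } a \in A\rangle,\]
where $C$ is a finite $p$-by-$A$ group and $A$ is identified with a subgroup of $C$. It therefore suffices to embed $H$ in a double of a finite $p$-by-$A$ group.

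\emph{The double.} Let $z$ generate a cyclic group $\langle z\rangle\cong C_p$ and put $E=C\times\langle z\rangle$. This is again $p$-by-$A$: if $P\lhd C$ is a $p$-subgroup with $C/P\cong A$, then $P\times\langle z\rangle$ is a normal $p$-subgroup of $E$ with quotient $A$. Let $E'$ be a copy of $E$ via an isomorphism $\xi$, write $z'=\xi(z)$, and form the double $L=E*_A E'$ over the subgroup $A=A\times\{1\}\leqslant E$. Define $\psi:H\to L$ by $\psi|_C=\mathrm{id}_C$ and $\psi(t)=\tau:=zz'^{-1}$. This is well defined: $z$ is central in $E$, and $z'$ commutes with $\xi(A)=A$ in $L$, so $\tau$ centralizes $A$.

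\emph{Injectivity.} I will compare normal forms, using Britton's lemma in $H$ and the normal form theorem for amalgamated products in $L$. Take a nontrivial element of $H$ in Britton-reduced form
\[w=c_0t^{e_1}c_1t^{e_2}\cdots t^{e_m}c_m,\]
with $e_i\neq 0$ and $c_i\notin A$ whenever $0<i<m$ and $e_i,e_{i+1}$ have opposite signs. We may also merge consecutive powers of $t$, so $c_i\notin A$ or $c_i=1$ is irrelevant at same-sign junctions: there $c_i$ may lie in $A$ (for instance $c_i=1$). If $m=0$, then $w=c_0\neq1$ in $C$, so $\psi(w)\neq1$.

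Otherwise, substitute $\tau^{\pm1}$, where $\tau^e=(zz'^{-1})^e$ and $\tau^{-e}=(z'z^{-1})^e$ alternate between the factors. I then check each junction to see that, after absorbing each $c_i$ into a neighbouring letter of $E$ or $E'$, the result is an alternating word whose syllables all lie outside $A$.
\begin{itemize}
\item \emph{Same sign, e.g. $\tau c_i\tau$.} The piece is $\cdots z'^{-1}(c_iz)z'^{-1}\cdots$. Here $c_iz\in E\setminus A$ for every $c_i\in C$, since the $\langle z\rangle$-coordinate is nontrivial.
\item \emph{Opposite sign, $\tau c_i\tau^{-1}$.} The piece is $\cdots z'^{-1}c_iz'z^{-1}\cdots$, which is reduced because $c_i\notin A$.
\item \emph{Opposite sign, $\tau^{-1}c_i\tau$.} The piece is $z'z^{-1}c_izz'^{-1}=z'c_iz'^{-1}$, since $z$ is central in $E$. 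This is reduced because $c_i\notin A$.
\item \emph{The ends.} The letters $c_0$ and $c_m$ merge into an adjacent $z^{\pm1}$, giving a syllable of $E$ with nontrivial $z$-coordinate, or they stand as a separate syllable next to a $z'^{\pm1}$.
\end{itemize}
Each syllable of the resulting word lies in $E\setminus A$ or $E'\setminus A$, and consecutive syllables lie in different factors. The word has length at least $2$ whenever $m\ge1$, so $\psi(w)\neq1$ by the normal form theorem. Hence $\psi$ is injective, and $G\hookrightarrow H\hookrightarrow L$.

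\emph{Main obstacle.} The delicate step is the bookkeeping at the junctions, in particular showing that no cancellation occurs when $c_i\in A$ sits between same-sign powers. This is exactly why the extra central factor $C_p$ is needed: multiplying by $z$ pushes every such syllable out of the amalgamated subgroup $A$. If the direct case check becomes messy, I would fall back on a Bass--Serre argument: compute the action of $\langle C,\tau\rangle$ on the Bass--Serre tree of $L$ and show that it is the HNN-extension above.
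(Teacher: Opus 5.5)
Your proposal is correct and builds the same double as the paper, but it proves injectivity by a different route.

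\textbf{The paper's route.} In your notation, the paper works with $h=z^{-1}z'$ and never analyses arbitrary reduced words of the HNN-extension.
\begin{itemize}
\item It uses the normal form theorem in $L$ only to check one thing. For distinct $d_1,d_2$ in a left transversal $D$ of $A$ in $C$, the cyclic subgroups $\langle d_1hd_1^{-1}\rangle$ and $\langle d_2hd_2^{-1}\rangle$ intersect trivially. This takes two short computations, according to the sign of the exponent.
\item \Cref{lem:ping-pong} then gives $n$ such that $\{dh^nd^{-1}\mid d\in D\}$ freely generates a subgroup $M$ of the free group $\ker\rho$, where $\rho\colon L\to E$ is the natural retraction. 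The map $\psi$ sends $t$ to $h^n$.
\item Injectivity is then structural. $\psi$ maps the generating set $\{dtd^{-1}\mid d\in D\}$ of the normal closure $N$ of $t$ (\Cref{lem:gen_set_for_kernel}) bijectively onto that free basis, so $\psi|_N$ is an isomorphism onto $M$.
\item Finally, $\psi(C)\subseteq E$ meets $M\subseteq\ker\rho$ trivially, so $\psi$ is injective on $H=N\rtimes C$.
\end{itemize}
This mirrors the proof of \Cref{prop:emb_in_single_spec_HNN} and keeps the normal-form bookkeeping to a minimum, at the price of a non-explicit exponent $n$.

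\textbf{Your route.} You compare Britton-reduced forms in $H$ directly with reduced sequences in $L$; the paper mentions this in passing as a possible alternative. It is more elementary, and it shows that $n=1$ already works, giving the explicit embedding $t\mapsto zz'^{-1}$. Your junction analysis is sound:
\begin{itemize}
\item each $z'^{\pm1}$ survives as its own syllable;
\item same-sign junctions are protected by the nontrivial $\langle z\rangle$-coordinate;
\item opposite-sign junctions are protected by reducedness, with the centrality of $z$ handling $\tau^{-1}c_i\tau$.
\end{itemize}

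\textbf{Two cosmetic points.} If $c_0$ or $c_m$ lies in $A\setminus\{1\}$ next to a $z'^{\pm1}$, absorb it into that letter instead of leaving it as a separate syllable. Also, the statement of \Cref{prop:emb_in_single_spec_HNN} only promises some associated subgroup $B$, so note that your argument works verbatim with $B$ in place of $A$.
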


\begin{proof}
In view of \Cref{prop:emb_in_single_spec_HNN}, we can assume that 
\[G=\langle A',t \mid tbt^{-1}=b,~\text{ for all }b \in B \rangle,\]
for some finite $p$-by-$A$ group $A'$ and a subgroup $B \leqslant A'$.
Let $Y$ be a cyclic group of order $p$, generated by $y \in Y$. Then $C=A' \times Y$ is again a finite $p$-by-$A$ group, and we denote by $C'$ a copy of $C$, 
with a fixed isomorphism $\xi:C \to C'$. 
We will show that $G$ embeds in the double $H=C*_B C'$, of $C$ over $B$, given by \eqref{eq:double_of_C_over_B}. To this end, we will use 
use an argument similar to the one in the proof of \Cref{prop:emb_in_single_spec_HNN}, even though in this case one could also argue by using normal forms more directly.

Note that there is a canonical retraction $\rho: H \to C$ which restricts to the identity map on $C$ and to $\xi^{-1}$ on $C'$. Let $y'=\xi(y) \in C'$ and set $h=y^{-1}y' \in H$, so that $h \in \ker\rho$. Since $\ker\rho \cap C=\{1\}=\ker\rho \cap C'$, we know that $F=\ker\rho$ is free, by \Cref{lem:free_kernel}. Choose a left transversal $D$ for $B$ in $A'$ and define
\[U=\{dtd^{-1} \mid d \in D\} \subseteq N ~\text{ and }~    V=\{d h d^{-1} \mid d \in D \} \subseteq F,\]
where $N \lhd G$ is the normal closure of $t$.

Using the Normal Form Theorem for amalgamated free products \cite[Theorem~2.6 in Section~IV.2]{LyndonSchupp}, we see that $h$ has infinite order in $H$. Now, suppose that there are distinct $d_1,d_2 \in D$ such that  $ d_1 h^m d_1^{-1} =d_2 h^q d_2^{-1}$ in $H$, for some $m \in \N$ and $q \in \Z\setminus \{0\}$. 
Then $d_1^{-1}d_2 \in A' \setminus B$ and we have
\begin{equation}\label{eq:new_eq-1}
1=d_2^{-1}d_1h^m d_1^{-1} d_2 h^{-q}=d_2^{-1}d_1 (y^{-1}y')^m d_1^{-1} d_2 (y^{-1}y')^{-q}~\text{ in } H.
\end{equation}
If $q>0$ then \eqref{eq:new_eq-1} amounts to
\begin{equation}\label{eq:new_eq-2}
(d_2^{-1}d_1 y^{-1}) (y') \dots (y^{-1}) (y') (d_1^{-1} d_2) (y'^{-1}) (y) \dots (y'^{-1}) (y)=1,
\end{equation}
where we placed the brackets in such a way that each term either belongs to $C \setminus B$ or to $C'\setminus \xi(B)$ and consecutive terms are from different factors. This means that the left-hand side of \eqref{eq:new_eq-2} is a non-empty reduced sequence in the amalgamated free product $H$ (see \cite[Section~IV.2]{LyndonSchupp}), so equation \eqref{eq:new_eq-2} contradicts the Normal Form Theorem for amalgamated free products. 

Thus we must have $q<0$, so \eqref{eq:new_eq-1} becomes 
\begin{equation}\label{eq:new_eq-3}
(d_2^{-1}d_1 y^{-1}) (y') \dots (y^{-1}) (y') (d_1^{-1} d_2 y^{-1}) (y') \dots (y^{-1}) (y')=1.
\end{equation}
As before this yields a contradiction, hence $\langle d_1 h d_1^{-1} \rangle \cap \langle d_2 h d_2^{-1}\rangle =\{1\}$ in $H$.
Therefore,  we can apply \Cref{lem:ping-pong} to find $n \in \N$ such that the set
$W=\{f^n \mid f \in V\}$ freely generates a free subgroup $M \leqslant F$  of rank $|D|=|A':B|$. Since $h=y^{-1}y'$ centralizes $B=\xi(B)$ in $H$, we can define a homomorphism 
$\psi:G \to H$ by
\[\psi(a)=a, \text{ for all } a \in A',~\text{ and } \psi(t)=h^n.\]
Just like in \Cref{prop:emb_in_single_spec_HNN}, one can show that $\psi$ restricts to a bijection between $N$ and $M$ and use this to verify that $\psi$ is injective. Thus, $G$ embeds in $H$.
\end{proof}

\Cref{cor:free-by-p_in_double_of_p} from the Introduction is an immediate consequence of \Cref{thm:spec_HNN_embeds_in_double} and the fact that finite $p$-groups are solvable.

\section{Property (LR) for virtually free groups}
In this section we prove the main \Cref{thm:virt_free->(LR)}. If $G$ is a group, we will write $H \vr G$ to say that $H$ is a virtual retract of $G$.

\begin{lemma}[{\cite[Lemma~3.2]{Min-virt_retr_props}}] \label{lem:(LR)_goes_to_sbgps} Suppose that $G$ is a group and there are subgroups $H \leqslant K \leqslant G$. 
\begin{itemize}
  \item[(i)] If $H \vr G$ then $H \vr K$. In particular, every subgroup of a group with (LR) has (LR). 
  \item[(ii)] If $H \vr K$ and $K \vr G$ then $H \vr G$.
\end{itemize}
\end{lemma}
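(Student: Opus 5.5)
Both parts follow directly from the definitions, together with the fact that the intersection of a finite index subgroup of $G$ with $K$ has finite index in $K$.

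For (i), suppose $H \vr G$. Then there is a finite index subgroup $L \leqslant G$ and a retraction $\rho: L \to H$, so in particular $H \leqslant L$. Set $L' = L \cap K$. Since $H \leqslant L$ and $H \leqslant K$, we have $H \leqslant L'$, and $|K:L'| \le |G:L| < \infty$. The restriction $\rho|_{L'}: L' \to H$ is a homomorphism fixing $H$ pointwise, hence a retraction of $L'$ onto $H$. Thus $H \vr K$.

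The ``in particular'' clause follows at once. If $G$ has (LR) and $K \leqslant G$, then any finitely generated $H \leqslant K$ is a finitely generated subgroup of $G$. Hence $H \vr G$, and so $H \vr K$ by what we just proved.

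For (ii), suppose $K \vr G$ and $H \vr K$. Then there are a finite index subgroup $L \leqslant G$ with a retraction $\sigma: L \to K$, and a finite index subgroup $P \leqslant K$ with a retraction $\tau: P \to H$. Set $Q = \sigma^{-1}(P) \leqslant L$. Since $\sigma$ maps $L$ onto $K$, the preimage of the finite index subgroup $P$ satisfies $|L:Q| = |K:P| < \infty$, so $|G:Q| < \infty$. Moreover $H \leqslant P \leqslant K \leqslant L$, and $\sigma$ is the identity on $K$, so $P \subseteq Q$ and in particular $H \leqslant Q$. The composition $\tau \circ \sigma|_Q : Q \to H$ is a well-defined homomorphism, because $\sigma(Q) \subseteq P$. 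For $h \in H$ we have $\sigma(h) = h$ and $\tau(h) = h$, so this composition fixes $H$ pointwise. Hence it is a retraction, and $H \vr G$.

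The only point needing any care is the index computation $|L:Q| = |K:P|$ in (ii). It holds because $\sigma$ is surjective, so it induces a bijection between the cosets of $Q$ in $L$ and the cosets of $P$ in $K$.
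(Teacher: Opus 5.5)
Your proof is correct. The paper gives no proof of its own and simply cites \cite[Lemma~3.2]{Min-virt_retr_props}. Your argument is the standard one behind that reference: for (i) you restrict the retraction to $L \cap K$, and for (ii) you compose $\tau \circ \sigma$ on the finite index subgroup $\sigma^{-1}(P)$.
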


Let $C$ be a finite group with a subgroup $B \leqslant C$, and let $G$ be the double of $C$ over $B$, as defined in \eqref{eq:double_of_C_over_B}. 
Let us recall the construction of the Bass-Serre tree $T$ for $G$ (see \cite[Theorem~7 in Section~I.4]{Serre}). The set vertices of $T$ consists of the left cosets $\{gC \mid g \in G \} \sqcup \{gC'\mid g \in G\}$, and two vertices $xC$ and $yC'$ are adjacent if and only if there exists $z \in G$ such that $xC=zC$ and $yC'=zC'$. 
The group $G$ acts on $T$ naturally, on the left, without edge inversions. We denote by $\st_G(v)$ the \emph{$G$-stabiliser} of a vertex $v$ of $T$.

Note that the group $G$ admits an involution $\gamma \in \mathrm{Aut}(G)$ defined by 
\[\gamma(c)=\xi(c),\text{ for all } c \in C,~\text{ and } \gamma(c')=\xi^{-1}(c'),\text{ for all } c' \in C'.\] We let $\widetilde{G}$ denote the semidirect product $\widetilde{G}=G \rtimes \langle \gamma \rangle_2$, so that $|\widetilde{G}:G|=2$. The next lemma is straightforward to verify using the observation that the action of $\gamma$ interchanges $C$ with $C'$ in $G$.

\begin{lemma}\label{lem:extension_of_double} The natural action of $G$ on the Bass-Serre $T$ extends to an action of $\tG$ on $T$, 
where the element $\gamma \in \tG$ acts as an inversion in the edge $e_0$ joining the vertices $C$ and $C'$ in $T$. In other words, 
\[\gamma.(xC)=\gamma(x)C'~\text{ and }~\gamma.(xC')=\gamma(x)C,~~\text{ for all } x \in G.\]
Moreover, for every vertex $v$ of $T$ we have $\st_{\tG}(v)=\st_G(v)$.
\end{lemma}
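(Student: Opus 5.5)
We have to check three things: that the formulas define an action of $\tG$ on the vertex set of $T$, that this action preserves adjacency (so it is an action by tree automorphisms extending the $G$-action), and the statements about $e_0$ and stabilizers. The plan is to define the action of $\gamma$ by the displayed formulas and let $G$ act as before.

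\textbf{Well-definedness.} The formulas do not depend on the coset representatives. If $xC=zC$, then $z^{-1}x\in C$. Hence $\gamma(z)^{-1}\gamma(x)=\gamma(z^{-1}x)\in\gamma(C)=C'$, so $\gamma(x)C'=\gamma(z)C'$. The computation for cosets of $C'$ is symmetric.

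\textbf{Group relations.} The action of $\tG=G\rtimes\langle\gamma\rangle_2$ is determined by the actions of $G$ and of $\gamma$, so I need two relations.
\begin{itemize}
\item[(1)] $\gamma$ acts as an involution: $\gamma.(\gamma.(xC))=\gamma^2(x)C=xC$, and likewise for cosets of $C'$.
\item[(2)] The semidirect-product relation $\gamma g\gamma^{-1}=\gamma(g)$: indeed $\gamma.(g.(\gamma.(xC')))=\gamma(g\gamma(x))C'=\gamma(g)\,xC'$, and similarly for cosets of $C$.
\end{itemize}
Together, (1) and (2) give a well-defined action of $\tG$ on the vertices of $T$ that extends the $G$-action.

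\textbf{Adjacency.} Suppose $xC$ and $yC'$ are adjacent. Then there is $z$ with $xC=zC$ and $yC'=zC'$. Their images are $\gamma(z)C'$ and $\gamma(z)C$, which are adjacent, witnessed by $\gamma(z)$. So $\tG$ acts on $T$ by graph automorphisms. The element $\gamma$ swaps $C$ and $C'$, so it inverts the edge $e_0$.

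\textbf{Stabilizers.} Every element of $\tG\setminus G$ has the form $g\gamma$ with $g\in G$. Such an element maps vertices of type $C$ to vertices of type $C'$ and vice versa, because $G$ preserves types and $\gamma$ swaps them. Hence $g\gamma$ fixes no vertex, and therefore $\st_{\tG}(v)=\st_G(v)$ for every vertex $v$.

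The only step needing care is relation (2), ensuring the semidirect product acts; it is a direct computation.
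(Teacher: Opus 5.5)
Your proof is correct and is essentially the verification the paper has in mind when it calls the lemma straightforward ``using the observation that the action of $\gamma$ interchanges $C$ with $C'$''. You check that the formulas are well defined on cosets, verify the defining relations $\gamma^2=1$ and $\gamma g\gamma^{-1}=\gamma(g)$ of $\tG$, show adjacency is preserved, and note that elements of $\tG\setminus G$ swap the two vertex types and so fix no vertex.
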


Let $E$ denote the set of unoriented edges of $T$ (any such edge is just a $2$-element set $\{gC,gC'\}$, for some $g \in G$).
Let $e_0 \in E$ be the edge of $T$ between the vertices $C$ and $C'$. 
Then $G$ acts on $E$ transitively, so we can use $\gamma$ to define the \emph{inversion in an edge $e \in E$} as $i_e =g \gamma g^{-1} \in \tG$, where $g \in G$ is any element such that $e=g.e_0$. Note that $i_e$ is well-defined because $\gamma$ commutes with $B=\st_G(e_0)$ in $\tG$. We also observe that 
\begin{equation}\label{eq:h.i_e}
i_e^2=1~\text{ and }~h i_e h^{-1}=i_{h.e},~\text{ for all }h \in G \text{ and } e \in E.
\end{equation}

We are now ready to prove the main result of this section.
\begin{thm}\label{thm:double_has_(LR)} Let $G$ be a double of a finite group $C$ over a subgroup $B \leqslant C$. Then $G$ has property (LR). 
\end{thm}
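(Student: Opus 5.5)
Let $H \leqslant G$ be finitely generated; we must show $H \vr G$. If $H$ is finite, it fixes a vertex of $T$, and I expect a separate, easy argument to handle it (or the general argument below applies with $U$ a single vertex). In general, since $H$ is finitely generated and acts on $T$, I take a nonempty $H$-invariant subtree $U \subseteq T$ with $H\backslash U$ finite. If $H$ fixes no vertex, $U$ is the minimal invariant subtree; otherwise $U$ is a fixed vertex. Let $\partial U \subseteq E$ be the set of edges $e$ with exactly one endpoint in $U$. Since $H$ preserves $U$, it permutes $\partial U$. Because vertex stabilizers in $G$ are finite and $H\backslash U$ is finite, $\partial U$ consists of finitely many $H$-orbits: each vertex has finite degree $|C:B|$. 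Define $N=\langle i_e \mid e \in \partial U\rangle \leqslant \tG$. By \eqref{eq:h.i_e}, $H$ normalizes $N$.

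The key geometric step is Scott/Haglund's idea. Let $\tG$ act on $T$ as in \Cref{lem:extension_of_double}. The inversion $i_e$ for $e\in\partial U$ swaps the two sides of $e$ and maps the half-tree containing $U$ to the opposite half-tree. I would show by a ping-pong argument that $N$ acts so that the translates $\{nU \mid n\in N\}$ are pairwise disjoint, or at least overlap only appropriately, and tile $T$. Concretely, $T$ is the union of the subtrees $nU$ together with the edges $n.e$, $e\in\partial U$. Moreover, $N$ is the free product of the order-two groups $\langle i_e\rangle$, $e\in\partial U$. This follows from the normal form in the tree: a reduced word $i_{e_1}\cdots i_{e_k}$ moves $U$ to a subtree separated from $U$ by the edge $e_1$, by induction on $k$.

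Two consequences follow. First, $H\cap N=\{1\}$: an element of $N\setminus\{1\}$ moves $U$ off itself, while $H$ preserves $U$. Second, $HN$ acts on $T$ with $HN\backslash T$ a quotient of $H\backslash (U\cup \partial U)$. This is finite, and since $\tG$ acts with finite stabilizers and finite quotient, $|\tG:HN|<\infty$. More precisely, for any $g\in\tG$ the vertex $g.C$ lies in some $nU$, so $n^{-1}g.C=h u$ for a $u$ in a finite set of representatives. Hence $g$ lies in finitely many cosets $HN\cdot x$, using finiteness of $\st_{\tG}(C)=C$.

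Finally, $HN=N\rtimes H$ retracts onto $H$ via $\pi:HN\to H$ with kernel $N$. The group $K=HN\cap G$ has finite index in $G$ and contains $H$, and $\pi|_K$ is a retraction of $K$ onto $H$. Thus $H\vr G$. The main obstacle is the tiling/ping-pong step: verifying that nontrivial elements of $N$ do not stabilize $U$, and that the $N$-translates of $U$ together with the boundary edges cover $T$. I would attempt this by induction on the length of reduced words in the generators $i_e$, tracking on which side of the first edge the image of $U$ lies. For covering, I would induct on the distance from $U$: a vertex adjacent to $nU$ but outside it is reached through some $n.e$, and applying $n i_e n^{-1}$ brings it into $nU$.
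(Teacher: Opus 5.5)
Your proposal is correct and follows the paper's route. You take a cocompact $H$-invariant subtree $U$ and let $N$ be generated by the inversions $i_e$, $e\in\partial U$. Ping-pong on the half-trees gives $H\cap N=\{1\}$ (and $N\cong \ast_{e\in\partial U}\langle i_e\rangle$), and induction on $\d_T(\cdot,U)$ with finiteness of $\st_{\tG}(C)$ gives $|\tG:HN|<\infty$. Restricting the retraction $HN\to H$ to $K=HN\cap G$ is exactly how the paper applies \Cref{lem:(LR)_goes_to_sbgps}.(i).
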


\begin{proof} We will assume that $G$ is given by \eqref{eq:double_of_C_over_B}. Let $T$, $E$, $\gamma \in \mathrm{Aut}(G)$ and $\tG=G \rtimes \langle \gamma \rangle_2$ be as described above.
Suppose that $H \leqslant G$ is a finitely generated subgroup. Then, by \cite[Theorem ~I.4.12 and Proposition~I.4.13]{DD}, there is a non-empty $H$-invariant subtree $U$ in $T$ on which $H$ acts with finitely many orbits of vertices and edges.

Let $\partial U$ denote the set of all edges $e \in E$ such that exactly one of the endpoints of $e$ belongs to $U$. Since $U$ is $H$-invariant, we see that the same is true for  $\partial U$, so, in view of \eqref{eq:h.i_e}, we conclude that the subgroup 
\[N=\langle i_e \mid e \in \partial U \rangle \] is normalized by $H$ in $\tG$.
We will show that $|\tG:HN|<\infty$ and $H \cap N=\{1\}$, i.e., $H$ is a virtual retract of $\tG$.

Fix any vertex $u \in U$. Since $H$ acts on $U$ with finitely many orbits of vertices, there exist elements $g_1,\dots,g_k \in G$ such that $G.u \cap U=\bigsqcup_{j=1}^k Hg_j.u$. Consider any $g \in \tG$. We will use the induction on the edge-path distance $\d_T(g.u,U)$ to show that 
\begin{equation}\label{eq:induct_statement}
g.u \in (\bigcup_{j=1}^k NHg_j).u.
\end{equation}
If $g.u \in U$ then $g.u \in \bigcup_{j=1}^k Hg_j.u$ by the definition of $g_j$. Hence, we can assume that $\d_T(g.u,U)>0$. Let $e \in \partial U$ be the first edge of the geodesic path in $T$  joining $U$ to $g.u$. Evidently, $\d_T(i_e.(g.u)), U)< \d_T(g.u,U)$, so $i_e.(g.u) \in (\bigcup_{j=1}^k NHg_j).u$, by the induction hypothesis. Since $i_e \in N$, we can conclude that \eqref{eq:induct_statement} holds.

Now, recall that $D=\st_{\tG}(u)$ is a finite group isomorphic to $C$, by \Cref{lem:extension_of_double}. Combined with \eqref{eq:induct_statement}, this shows that $\tG =\bigcup_{j=1}^k NHg_jD$, thus $|\tG:NH|<\infty$.

It remains to show that $N \cap H=\{1\}$. For each edge $e \in \partial U$, removing the interior of $e$ divides $T$ into the union of two disjoint subtrees; let $T_e$ denote the subtree that does not contain $U$. Note that $U \cap T_e=\emptyset$ and 
\begin{equation}\label{eq:props_of_T_e}
i_e.(U \cup T_f) \subseteq T_e,~\text{ whenever }e,f \in \partial U,~e \neq f.
\end{equation}
Now, if $g \in N\setminus \{1\}$ then $g=i_{e_1} \cdots i_{e_n}$, for $n \in \N$ and some $e_1,\dots,e_n \in E$ satisfying $e_j \neq e_{j+1}$, for $j=1,\dots, n-1$. Then \eqref{eq:props_of_T_e} implies that $g.u \in T_{e_1}$, hence $g.u \notin U$ and so $g \notin H$ (as $u \in U$ and $U$ is $H$-invariant). Thus $N \cap H=\{1\}$. (Moreover, this also shows that 
$N \cong \mathop{\ast}_{e \in \partial{U}} \, \langle i_e \rangle$,
i.e., $N$ is isomorphic to the free product of cyclic groups of order $2$.)

Therefore, we have proved that $H\vr \tG$, and since $H \leqslant G \leqslant \tG$, we also have $H \vr G$, by \Cref{lem:(LR)_goes_to_sbgps}.(i). Hence $G$ has (LR), as required.
\end{proof}

\begin{rem} Note that in \Cref{thm:double_has_(LR)} the kernel $N$, of a virtual retraction of $\tG$ onto $H$, only depends on the choice of an $H$-invariant subtree $U$ in $T$, such that $H$ acts on $U$ cocompactly. If $H$ is infinite then there is a natural choice for $U$ as the unique minimal $H$-invariant subtree of $T$ ($U$ will be the union of all the axes of hyperbolic elements in $H$, see \cite[Proposition~I.4.13]{DD}). Thus, in the case when $|H|=\infty$ there is a canonical choice for $N$. Moreover, as shown in the proof of  \Cref{thm:double_has_(LR)}, $N$ is isomorphic to a free product of cyclic groups of order $2$ and $H$ acts on this free product by permuting the free factors.
\end{rem}

We are now ready to prove \Cref{thm:virt_free->(LR)} from the Introduction.

\begin{proof}[Proof of \Cref{thm:virt_free->(LR)}]
By \Cref{lem:emb_in_multiple_special_HNN}, every virtually free group can be embedded in a multiple special HNN-extension $G$ of a finite group $A$, given by \eqref{eq:multiple_HNN}. So, in view of \Cref{lem:(LR)_goes_to_sbgps}.(i), it suffices to prove that $G$ has (LR). Let $H$ be  a finitely generated subgroup of $G$. Then there is a finite subset $J \subseteq I$ such that $H$ is contained in the subgroup  $K=\langle A,\{t_j\}_{j \in J}\rangle$ in $G$. Evidently, there is a retraction $\rho:G \to K$ such that $\rho(t_i)=1$, for all $i \in I \setminus J$.

Now, by \Cref{thm:spec_HNN_embeds_in_double}, $K$ embeds in a double $L$ of some finite group, and $L$ has (LR) by \Cref{thm:double_has_(LR)}. Therefore, $K$ has (LR) by \Cref{lem:(LR)_goes_to_sbgps}.(i), so $H \vr K$. Since $K$ is a retract of $G$, it remains to apply \Cref{lem:(LR)_goes_to_sbgps}.(ii) to conclude that $H \vr G$. Thus $G$ has (LR), and the proof is complete.
\end{proof}

\section{More groups with (LR)}
Property (LR) is not generally preserved by direct products (e.g., the direct square of the free group of rank $2$ is not even subgroup separable \cite{Al-Greg}), but the situation is much better when one of the factors is finitely generated and virtually abelian.

\begin{lemma}[{\cite[Proposition~5.6]{Min-virt_retr_props}}]\label{lem:dir_prod_with_virt_ab} If $A$ is a finitely generated virtually abelian group and $B$ is a group with (LR) then $A \times B$ has (LR). 
\end{lemma}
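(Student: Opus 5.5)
Let $H \leqslant A\times B$ be a finitely generated subgroup, and let $\pi_A$ and $\pi_B$ be the two coordinate projections. The plan is to reduce to the case where $A$ is free abelian and then build the retraction coordinate by coordinate.

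\emph{Reduction to $A$ free abelian.} First I pass to a finite index subgroup $A_0\leqslant A$ with $A_0\cong\Z^m$. Then $H_0=H\cap(A_0\times B)$ has finite index in $H$, so it is finitely generated. I would like a general transfer principle of the following form: if $H_0$ has finite index in $H$ and $H_0\vr G$, then $H\vr G$. This is false in general, since retractions need not extend over finite extensions. So instead I will use (LR) of $A$ itself, which holds because finitely generated virtually abelian groups have (LR) as cited. Concretely, I apply (LR) of $A$ to $\pi_A(H)$ to pass to a finite index subgroup $A_1\leqslant A$ retracting onto $\pi_A(H)$. Then I replace $G$ by $A_1\times B$, which contains $H$ and has finite index in $G$.

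\emph{Constructing the retraction.} The core step is to set $Q=\pi_B(H)$, which is finitely generated. By (LR) for $B$, there is $B_1\leqslant B$ of finite index with a retraction $r:B_1\to Q$. Next consider $L=H\cap A$, the kernel of $\pi_B|_H$, which is a subgroup of $A$. I then try to produce a retraction of a finite index subgroup of $A\times B_1$ onto $H$ that has the form
\[(a,b)\mapsto \theta(a,b)\cdot s(r(b)),\]
where $s:Q\to H$ is a section-like map. The difficulty is that $\pi_B|_H:H\to Q$ has kernel $L$ and need not split. My approach is to view $H$ as a subgroup of $\pi_A(H)\times Q$, which is a fibre-product-like subgroup, and to exploit the fact that $A$ is virtually abelian. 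After passing to finite index, I can arrange that $L$ is a direct factor of a finite index subgroup $A_2$ of $\pi_A(H)$. This uses the fact that subgroups of $\Z^m$ are virtual direct factors: choose $A_2=L'\oplus P$ with $L'$ of finite index in the isolator of $L$. The quotient $\pi_A(H)/L$ then becomes a virtually abelian quotient of $Q$.

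\emph{Main obstacle.} The hard step is handling the ``graph'' part of $H$: namely, the homomorphism $Q\to \pi_A(H)/L$ induced by $H$. I would first show that, after passing to a finite index subgroup of $Q$ (and correspondingly of $H$), this homomorphism factors through the retraction $r$. I expect to use that it lands in a finitely generated virtually abelian group. I would then extend it over $B_1$, or over a finite index subgroup of $B_1$, as $\phi\circ r$. With that in hand, the map
\[(a,b)\mapsto \bigl(p(a)+\phi(r(b)),\, r(b)\bigr)\]
is the candidate retraction, where $p$ denotes projection to $L'$. Finally, I need to undo the passage to finite index subgroups of $H$. Here I would try to choose every finite index subgroup to be normal and invariant under $H$, and then average or twist over the finite quotient. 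This last part is the step I am least sure of.
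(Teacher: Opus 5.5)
Your proposal has a genuine gap: each of the three nontrivial steps fails as stated.

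(a) The ``reduction to $A$ free abelian'' is never carried out. Replacing $A$ by $A_1$ only makes $\pi_A(H)$ a retract, and $\pi_A(H)$ is still only virtually abelian. You then use additive notation and ``subgroups of $\Z^m$ are virtual direct factors'' for it, so the non-abelian case is never treated.

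(b) The candidate map $(a,b)\mapsto(p(a)+\phi(r(b)),r(b))$ is not defined. The map $\phi$ takes values in the quotient $\pi_A(H)/L$, so producing an element of $A$ requires a homomorphic lift $Q\to\pi_A(H)$ of $\phi$. Such a lift is exactly a splitting of $\pi_B|_H\colon H\to Q$, which is the obstruction you identified and never remove. Also, $p$ is only defined on $A_2$, and $A_2\times B_1$ need not contain $H$. The preceding step, arranging that $\phi$ ``factors through $r$'', is vacuous, since $\phi\circ r$ is always defined.

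(c) Shrinking $H$ to a finite index subgroup and then ``averaging'' cannot be repaired. As you observe yourself, a virtual retraction onto $H_0\leqslant H$ does not give one onto $H$. Nor is there any way to average homomorphisms with values in the non-abelian group $H$.

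The missing idea is that no section of $H\to Q$ is needed: instead, kill a suitable normal subgroup of $A$.

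\emph{Reduction.} Since $\pi_A(H)\vr A$ (finitely generated virtually abelian groups have (LR)) and $Q\vr B$, we get $\pi_A(H)\times Q\vr A\times B$. By \Cref{lem:(LR)_goes_to_sbgps}(ii) you may therefore assume $A=\pi_A(H)$ and $B=Q$. Then $L=H\cap A\lhd A$. Moreover, $(a,b)\in H$ acts on $A$ by conjugation by $a$, so every $N\lhd A$ is normalized by $H$.

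\emph{Key step.} Suppose $N\lhd A$ satisfies $N\cap L=\{1\}$ and $|A:LN|<\infty$. Then $K=HN=N\rtimes H$ retracts onto $H$. Since $K$ maps onto $B$, we also get $|A\times B:K|=|A:A\cap K|\le|A:LN|<\infty$.

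\emph{Finding $N$.} When $A$ is free abelian, your $P$ is already such an $N$ (and $K=H\times P$), so $\phi$ plays no role at all. For general $A$, choose a free abelian $A_0\lhd A$ of finite index. Then $L\cap A_0$ is $A$-invariant. Maschke's theorem for the finite group $A/A_0$ acting on $A_0\otimes\Q$ gives an $A$-invariant complement $W$ to the $\Q$-span of $L\cap A_0$. A rank count shows that $N=W\cap A_0$ works.

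This is where averaging over a finite quotient legitimately enters: in building an invariant complement inside $A$, not in correcting a retraction after shrinking $H$.
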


This lemma, combined with \Cref{lem:(LR)_goes_to_sbgps}.(i) and \Cref{thm:virt_free->(LR)}, allows us to show that a group has property (LR) by embedding it in a direct product of a virtually free group and a virtually abelian group. In this section we give several examples when such embeddings arise.

\begin{lemma}\label{lem:comm_with_FxZn->emb} Let $G$ be a group commensurable with the direct product of a free group and a free abelian group of finite rank. Then $G$ embeds as a finite index subgroup in the direct product of a virtually free group and a finitely generated virtually abelian group. 
\end{lemma}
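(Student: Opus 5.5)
We will prove Lemma~\ref{lem:comm_with_FxZn->emb} by reducing to a finite index subgroup of $G$ that is normal and is isomorphic to $F \times \Z^n$, and then using an induced-representation-type embedding.

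\emph{Step 1: a normal subgroup of $G$ of the form $F\times\Z^n$.} Let $P=F\times \Z^n$, with $F$ free. By commensurability, $G$ has a finite index subgroup $G_0$ isomorphic to a finite index subgroup $P_0$ of $P$. Since $P_0\cap F$ has finite index in $P_0$ and is normal in $P_0$, the subgroup $(P_0\cap F)\times(P_0\cap \Z^n)$ has finite index in $P_0$ and is again free times free abelian. So we may assume $G_0\cong F_0\times \Z^m$, where $F_0$ is free. Replacing $G_0$ by its normal core in $G$ keeps this form. Indeed, a finite index subgroup $Q$ of $F_0\times\Z^m$ contains the finite index subgroup $(Q\cap F_0)\times(Q\cap \Z^m)$, and we may intersect further with all its $G$-conjugates. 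We need characteristic pieces to make this canonical. The cleanest choice is the following. If $F_0$ is non-abelian, then $Z(G_0)=\Z^m$ is characteristic in $G_0$. We can then take $K=G_0$ normal of finite index in $G$, with $Z=Z(K)\cong \Z^m$ normal in $G$. (The degenerate case where $F_0$ is cyclic or trivial gives $G$ virtually abelian and finitely generated, so the statement is trivial: take the virtually free factor trivial.)

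\emph{Step 2: the two projections.} Define $\alpha:G\to G/C_G(Z)$. Because $Z\cong \Z^m$ and $G/K$ is finite, the image of $G$ in $\aut(Z)$ is finite, so $G_1=C_G(Z)$ has finite index in $G$. Consider the map $\pi_1:G\to G/Z$. Since $K/Z\cong F_0$ is free of finite index, $G/Z$ is virtually free. For the second factor we want a map $\pi_2:G\to V$ with $V$ finitely generated virtually abelian, injective on $Z$. The natural candidate is $V=G/[K,K]'$, or more precisely $G/N$ where $N=[F_0,F_0]$-type subgroup. I would take $N\lhd G$ to be the commutator subgroup $[K,K]$, which equals $[F_0,F_0]$ and is characteristic in $K$, hence normal in $G$. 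Then $G/[K,K]$ is an extension of the finite group $G/K$ by $K^{ab}\cong F_0^{ab}\times\Z^m$. This is finitely generated virtually abelian provided $F_0$ has finite rank. Then $\pi_1\times\pi_2:G\to G/Z\times G/[K,K]$ has kernel $Z\cap[K,K]=\{1\}$, so it is an embedding.

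\emph{Step 3: finite index of the image.} The image of $K$ contains the image of $F_0$ and of $Z$. The image of $F_0\times Z$ in $G/Z\times G/[K,K]$ is $\{(\bar f,\bar f\bar z)\}$, which is generally not of finite index because of the $F_0^{ab}$ part. So instead I would choose $\pi_2$ to kill $F_0$: use $V=G/F_0$. For this, $F_0$ must be normal, which needs replacing $F_0$ by a canonical complement. This is the main obstacle. The complement is not canonical, but after passing to $G_1=C_G(Z)$, the group $G_1$ is a central extension of the virtually free group $G_1/Z$ by $Z$. The extension class is then torsion in $H^2$ (it splits on $K$), so some finite index normal subgroup splits canonically enough. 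Concretely, I expect to find a homomorphism $\theta:G_1\to Z\otimes\Q$ restricting to the identity on $Z$, for instance by averaging over coset representatives of $K$ (a transfer $G_1\to K\to Z$ divided by $|G_1:K|$). Then $\theta(G_1)$ is a finitely generated abelian group containing $Z$ with finite index. Inducing up from $G_1$ to $G$ (the standard wreath-type induced embedding into $\theta(G_1)\wr (G/G_1)$, which is finitely generated virtually abelian) gives $\pi_2$ injective on $Z$, with $\ker\pi_2\supseteq$ a finite index subgroup of $F_0$. Then $(\pi_1,\pi_2)$ is injective, since its kernel lies in $Z$ and $\pi_2$ is injective on $Z$. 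Its image contains $\pi_1(\ker\pi_2\cap K)\times \pi_2(Z)$ up to finite index, which is of finite index in the product $\pi_1(G)\times\pi_2(G)$, since $\pi_1(G)$ is virtually $F_0$ and $\pi_2(G)$ is virtually $\pi_2(Z)$. This completes the embedding.
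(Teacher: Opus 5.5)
There is a genuine gap, at the ``inducing up'' in Step~3. The setup is fine up to that point: $Z=Z(K)\lhd G$, $G_1=C_G(Z)$, $\pi_1$, and the transfer construction of $\theta\colon G_1\to Z\otimes\Q$ with $\theta|_Z=\mathrm{id}$. For $g\in G$ put $\theta^g(x)=g\,\theta(g^{-1}xg)\,g^{-1}$, using the action of $G$ on $Z\otimes\Q$. The kernel of the induced map $\pi_2\colon G\to\theta(G_1)\wr(G/G_1)$ is $\bigcap_{g\in G}g(\ker\theta)g^{-1}=\bigcap_g\ker\theta^g$. Each $\theta^g$ is the identity on $Z$. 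So if $x^k\in Z\ker\pi_2$ for some $x\in G_1$ and $k\neq0$, say $x^k=zy$ with $y\in\ker\pi_2$, then $k\theta^g(x)=z=k\theta(x)$ for every $g$. Hence $|G:Z\ker\pi_2|<\infty$ holds \emph{only if} $\theta^g=\theta$ for all $g\in G$, i.e.\ only if $\ker\theta$ was already normal in $G$, in which case no induction is needed.

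Your $\theta$ depends on the projection $K\to Z$ along the non-canonical complement $F_0$, and elements of $G\setminus G_1$ need not preserve $F_0$. So your claim that $\ker\pi_2$ contains a finite index subgroup of $F_0$ is unjustified, and with it the finite index of the image of $(\pi_1,\pi_2)$. Here is an example where it fails. Let $K=F(a,b)\times\langle c\rangle$ and $G=K\rtimes\langle\sigma\rangle_2$, where $\sigma$ swaps $a,b$ and inverts $c$. Take $F_0=\langle ac,b\rangle$, which is a legitimate free factor with $K=F_0\times\langle c\rangle$. Then $G_1=K$, $\theta$ is the projection with kernel $F_0$, and $\sigma F_0\sigma^{-1}=\langle a,bc^{-1}\rangle$. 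Writing $e_a,e_b$ for exponent sums, $\ker\pi_2=\{fc^{e_a(f)} \mid e_a(f)+e_b(f)=0\}$. Its image in $K/Z\cong F(a,b)$ has infinite index, and in fact $\pi_2(G)$ is virtually $\Z^2$.

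What you actually need is a subgroup $R$ that is normal in all of $G$, with $R\cap Z=\{1\}$ and $|G:RZ|<\infty$. Then $G\to G/Z\times G/R$ works: it is injective, and its image contains the image of $RZ$, which is $RZ/Z\times RZ/R$, of finite index. This is exactly the paper's argument, which quotes the existence of such an $R$ for the normal virtual retract $Z\cong\Z^n$ from \cite[Lemma~4.3]{Merl-Min-1}.

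You can also build $R$ from your own $\theta$ by averaging the twisted conjugates: set $\bar\theta=\frac{1}{|G:G_1|}\sum_{gG_1\in G/G_1}\theta^g$. This is well defined, because $G_1$ acts trivially on $Z$ and $\theta$ maps to an abelian group. It is the identity on $Z$, it satisfies $\bar\theta^h=\bar\theta$ for all $h\in G$, and it takes values in $\frac{1}{|G:K|}Z$. So $R=\ker\bar\theta\lhd G$ has the required properties, $G/R$ is finitely generated virtually abelian, and no wreath product is needed.

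Separately, Step~1 asserts without proof that $G_0$ is normal in $G$. This is easily repaired. Every subgroup $Q\leqslant F_0\times\Z^m$ is isomorphic to $p(Q)\times(Q\cap\Z^m)$, where $p$ is the projection to $F_0$, because a central extension of a free group splits. Hence the normal core of $G_0$ has the required form. Alternatively, argue as the paper does, with the core $N$ and $Z=N\cap A$.
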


\begin{proof} The assumptions imply that $G$ has a finite index subgroup $K$ isomorphic to the direct product $F \times A$, where $F$ is free and $A \cong \Z^n$, for some $n \in \N_0$. Let $N \lhd G$ be the normal core of $K$, so that $|G:N|<\infty$. Without loss of generality, we can assume that $N$ projects onto the direct factors $F$ and $A$ of $K$.

If $\rank(F) \le 1$ then $K$ is abelian, so $G$ is virtually abelian.  Thus we can further assume that $\rank(F) \ge 2$. Then $F$ has trivial centre, so the center of $N$ is $Z=N \cap A \lhd G$ and $|A:Z|<\infty$. Thus $F Z \cong F \times Z$ has finite index in $K$, hence $Z \cong \Z^n$ is a normal virtual retract of $G$. Now, by \cite[Lemma~4.3]{Merl-Min-1}, there is a normal subgroup $R \lhd G$ such that 
\begin{equation}\label{eq:ZR}
Z \cap R =\{1\}~\text{ and }~|G:ZR|<\infty.
\end{equation}

Note that $N/Z \cong F$ has finite index in $G/Z$, hence $G/Z$ is virtually free. Moreover, the image of $Z$ has finite index in $G/R$, so $G/R$ is finitely generated and virtually abelian. Now, \eqref{eq:ZR} implies that the homomorphism $G \to G/Z \times G/R$, $g \mapsto (gZ,gR)$, is injective and its image has finite index.
\end{proof}

We can now prove \Cref{cor:gps_commens_to_FxZn} from the Introduction.
\begin{proof}[Proof of \Cref{cor:gps_commens_to_FxZn}] According to \Cref{lem:comm_with_FxZn->emb}, $G$ embeds as a finite index subgroup in the direct product of a virtually free group with a finitely generated virtually abelian group. Therefore, we deduce that $G$ has (LR) by combining \Cref{thm:virt_free->(LR)}, \Cref{lem:dir_prod_with_virt_ab} and \Cref{lem:(LR)_goes_to_sbgps}.(i). It then follows that $G$ is subgroup separable, see \cite[Lemma~5.1.(iii)]{Min-virt_retr_props}. 

Finitely generated virtually abelian groups are conjugacy separable by \cite[Proposition~1 in Section~4.C]{Segal}, and virtually free groups are conjugacy separable by Dyer's result \cite[Theorem~3]{Dyer}. Both of these classes of groups are closed under taking arbitrary subgroups, hence groups in these classes are hereditarily conjugacy separable. So, $G$ embeds as a finite index subgroup in a direct product of hereditarily conjugacy separable groups, hence $G$ is itself hereditarily conjugacy separable by \cite[Lemma~7.3]{Mart-Min}, and (ii) holds.

In \cite[Lemma~6]{Rib-Zal-conj_dist_sbgps} it is shown that a virtual retract of a hereditarily conjugacy separable group is always conjugacy distinguished. Therefore, claim (iii) follows from claims (i) and (ii). 
\end{proof}

Given a natural number $n \in \N$, a \emph{generalized Baumslag-Solitar group of rank $n$} ($\gbs_n$ for short) is a group decomposing as the fundamental group of a finite graph of groups, where all vertex and edge groups are isomorphic to $\Z^n$. Such a group $G$ will necessarily commensurate any vertex group, which gives rise to a \emph{modular homomorphism} $\cM:G \to \mathrm{GL}(n,\Q)$, from $G$ to the abstract commensurator $\mathrm{GL}(n,\Q)$ of $\Z^n$, see \cite{Button-GBSn}. If the image  $\cM(G)$ is finite in $\mathrm{GL}(n,\Q)$, we will say that the $\gbs_n$ group $G$ has \emph{finite monodromy}. When $n=1$ we have $\mathrm{GL}(1,\Q)=\Q^*$, so a $\gbs_1$ group $G$ has finite monodromy if and only if $\cM(G) \subseteq \{-1,1\}$, i.e., $G$ is \emph{unimodular} in the sense of Levitt \cite{Levitt}.

With the help of \Cref{cor:gps_commens_to_FxZn}, we can characterize property (LR) in the class of all $\gbs_n$ groups.

\begin{cor} For any $n \in \N$, a $\gbs_n$ group $G$ has (LR) if and only if it has finite monodromy.
\end{cor}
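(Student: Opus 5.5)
We prove the two implications separately.

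\emph{Finite monodromy implies (LR).} Suppose $G$ is a $\gbs_n$ group with finite monodromy. We use the cited result \cite[Theorem~3.4]{Button-GBSn}, and for $n=1$ \cite[Proposition~2.6]{Levitt}: such a group is commensurable with $F \times \Z^n$ for a free group $F$. Then \Cref{cor:gps_commens_to_FxZn}(i) gives (LR) directly. If one prefers not to cite this, the direct route is as follows. Finiteness of $\cM(G)$ provides a finite index subgroup $G_0$ with trivial modular image. In $G_0$ a vertex group $Z\cong\Z^n$ is commensurated with trivial modular map, so some finite index subgroup $Z_0 \leqslant Z$ is normal in $G_0$. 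Since the graph is finite, we may even arrange $Z_0$ to be central in a further finite index subgroup. The quotient $G_0/Z_0$ is then the fundamental group of a finite graph of finite groups, hence virtually free by \Cref{thm:KPS}(a). A central extension of a virtually free group by $\Z^n$ virtually splits: pass to a free finite index subgroup of the quotient and lift a section. This makes $G$ commensurable with $F\times\Z^n$.

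\emph{(LR) implies finite monodromy.} Suppose $G$ has (LR) and let $Z$ be a vertex group, $Z\cong\Z^n$. For $g\in G$, the subgroup $H=\langle Z,g\rangle$ is finitely generated, so it is a retract of a finite index subgroup $K$ via some $\rho:K\to H$. Choose $m$ with $g^m\in K$, and let $Z_1 = Z \cap g^{m} Z g^{-m}$, which has finite index in $Z$. Conjugation by $g^m$ carries a finite index subgroup of $Z$ into $Z$, with rational matrix $\cM(g)^m$.

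The step I expect to be the main obstacle is extracting finiteness of $\cM(G)$ from retractions. The idea is to use that (LR) implies subgroup separability, together with the known fact that a commensurated subgroup which is separable (or a virtual retract) has finite image in the commensurator. For this I would first try \cite[Lemma~5.1]{Min-virt_retr_props} and the characterization of virtual retracts there. Concretely, $Z$ is a virtual retract of $G$: take $\rho:K\to Z$ with $K$ of finite index. For $g\in K$ and $z\in Z\cap g^{-1}Zg$ we have $gzg^{-1}\in Z$, so applying $\rho$ gives
\[
\rho(g)\,z\,\rho(g)^{-1}=gzg^{-1}.
\]
Since $Z$ is abelian, the left side equals $z$. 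Hence $g$ acts trivially on a finite index subgroup of $Z$, so $\cM(g)=1$ for all $g\in K$. As $K$ has finite index in $G$, the image $\cM(G)$ is finite.
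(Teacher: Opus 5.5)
Your proof is correct. The direction from finite monodromy to (LR) is the paper's argument: Button's theorem, then \Cref{cor:gps_commens_to_FxZn}. Your optional direct route is essentially a sketch of Button's theorem, and it works once you note two things. First, $G_0$ is finitely generated, so finitely many intersections make $Z_0$ central. Second, the normal elliptic subgroup $Z_0$ fixes the minimal $G_0$-invariant subtree.

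For the converse you take a genuinely different route. The paper uses (LR) only to get (VRC), and then cites Wang's theorem that (VRC) forces finite monodromy in $\gbs_n$ groups. You instead apply (LR) to the vertex group $Z\cong\Z^n$ itself. For a retraction $\rho\colon K\to Z$, any $g\in K$ and any $z\in Z\cap g^{-1}Zg$, you have $gzg^{-1}=\rho(gzg^{-1})=\rho(g)z\rho(g)^{-1}=z$. This shows $K\leqslant\ker\cM$, so $|\cM(G)|\le|G:K|$.

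Your argument is elementary and self-contained, and it applies to any commensurated finitely generated abelian subgroup that is a virtual retract. Wang's theorem gives more: virtual retractions onto cyclic subgroups already suffice, so (VRC), (LR) and finite monodromy are all equivalent for $\gbs_n$ groups. That extra strength is not needed for this corollary. For $n=1$ your computation gives the (VRC) version too, since $Z$ is then cyclic.

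Two small remarks. The paragraph introducing $H=\langle Z,g\rangle$ and $Z_1$ is never used and should be deleted. More importantly, the ``known fact'' that a \emph{separable} commensurated subgroup has finite image in the commensurator is false, so do not route anything through subgroup separability. For a counterexample, take hyperbolic $A\in\mathrm{SL}(2,\Z)$ and the $\gbs_2$ group $\Z^2\rtimes_A\Z$ (one vertex, one loop). It is polycyclic, hence subgroup separable, and its vertex group is normal and separable, yet $\cM$ has infinite image $\langle A\rangle$. What kills the monodromy is the retraction, exactly as in your computation, not separability.
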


\begin{proof} If $G$ has (LR), then, in particular, it has (VRC), which means that every cyclic subgroup of $G$ is a virtual retract. The latter implies that $G$ has finite monodromy, by a recent result of Wang \cite[Theorem~1.2]{Wang}. 

Conversely, suppose that $G$ has finite monodromy. In this case Button \cite[Theorem~3.4]{Button-GBSn} proved that $G$ has a finite index subgroup splitting as a direct product of a finite rank free group with $\Z^n$, hence $G$ has (LR) by \Cref{cor:gps_commens_to_FxZn}.
\end{proof}

Another method to generate groups satisfying the assumptions of \Cref{cor:gps_commens_to_FxZn} is by using certain extensions. If $A$ is a normal subgroup  of a group $G$ then $G$ acts on $A$ by conjugation, giving rise to a homomorphism $G \to \aut(A)$. By taking a further quotient by $\inn(A)$, we get natural homomorphism $G \to \out(A)$. Since $A$ is contained in the kernel of the latter homomorphism, it induces a \emph{natural homomorphism} $G/A \to \out(A)$.

\begin{lemma} \label{lem:benign_ext-2} Assume that $G$ is an extension of a normal subgroup $A$ by a group $B=G/A$, where $A$ has finite center,  $B$ is virtually polycyclic and the natural homomorphism $B \to \out(A)$ has finite image. Then $G$ has a finite index subgroup isomorphic to the direct product $A \times B'$, where $B'$ is a finite index subgroup of $B$.
\end{lemma}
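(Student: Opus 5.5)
The plan is to pass to a finite index subgroup of $G$ that acts on $A$ by inner automorphisms. We then split off the centralizer of $A$, using the finiteness of $Z(A)$ and the fact that virtually polycyclic groups are residually finite.

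\emph{Step 1.} Let $\alpha: G \to \aut(A)$ be the conjugation action and $\bar\alpha: B \to \out(A)$ the induced map. Its kernel $B_0$ has finite index in $B$, and we let $G_0$ be the full preimage of $B_0$ in $G$, so $|G:G_0|<\infty$. For every $g \in G_0$ there is $a_g \in A$ such that $g x g^{-1} = a_g x a_g^{-1}$ for all $x \in A$. Hence $G_0 = A\,\cent_{G_0}(A)$. Put $Q = \cent_{G_0}(A)$. Then $Q \lhd G_0$ and $Q \cap A = Z(A)$, which is finite. Consequently $G_0/A \cong Q/Z(A)$, so $Q/Z(A)\cong B_0$ is virtually polycyclic. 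Moreover $A$ and $Q$ commute elementwise, so $G_0$ is a central-product quotient of $A \times Q$ in which the two copies of $Z(A)$ are identified.

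\emph{Step 2.} We kill the finite central intersection $Z=Z(A)$ inside $Q$. The group $Q$ is an extension of the finite group $Z$ by the virtually polycyclic group $Q/Z$, so $Q$ itself is virtually polycyclic, and in particular finitely generated and residually finite. Residual finiteness gives a finite index normal subgroup $Q_1 \lhd Q$ with $Q_1 \cap Z = \{1\}$. Since $Q$ centralizes $A$, the subgroup $Q_1$ is normalized by $A$ as well, so $Q_1 \lhd G_0$. Also $A \cap Q_1 \subseteq A\cap Q = Z$, and therefore $A \cap Q_1=\{1\}$. As $A$ and $Q_1$ commute and intersect trivially, $AQ_1 \cong A \times Q_1$.

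\emph{Step 3.} We compute the index and identify the second factor. We have $|G_0 : AQ_1| \le |Q:Q_1|<\infty$ because $G_0=AQ$, so $AQ_1$ has finite index in $G$. Finally, $Q_1$ maps injectively to $B$, since $Q_1\cap A=\{1\}$. Its image $B'=Q_1A/A$ has finite index in $B_0$ and hence in $B$. Therefore $Q_1\cong B'$, and $G$ contains $A\times B'$ as a finite index subgroup.

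The main obstacle is Step 2, namely justifying that $Q$ is residually finite when $Z$ is finite. This follows because finite-by-(virtually polycyclic) groups are again virtually polycyclic, and virtually polycyclic groups are residually finite by Hirsch's theorem. If one prefers to avoid this, there is an alternative: choose a torsion-free finite index subgroup of $Q/Z$ (polycyclic groups are virtually poly-$\Z$). Its preimage $P$ is then a central-by-poly-$\Z$ extension. A finite index subgroup of $P$ is torsion-free by residual finiteness of $P$, and such a subgroup meets $Z$ trivially; this is still essentially the same residual finiteness input.
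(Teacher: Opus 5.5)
Your proof is correct and follows essentially the same route as the paper: split off the centralizer of $A$, note that it is finite-by-(virtually polycyclic) and hence residually finite, and take a finite index subgroup of it avoiding $Z(A)$, which then commutes with $A$ and meets it trivially. The only difference is that you prove $G_0=A\,\cent_{G_0}(A)$ directly, using the preimage $G_0$ of $\ker(B\to\out(A))$, whereas the paper works with $\cent_G(A)$ and cites the fact that $|G:A\,\cent_G(A)|<\infty$.
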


\begin{proof} The assumption that the image of $B$ in $\out(A)$ is finite implies that $|G:A\cent_G(A)|<\infty$ (cf. \cite[Remark~4.6]{Merl-Min-1}). We also know that $A \cap \cent_G(A)=\mathrm{Z}(A)$ is finite and $\cent_G(A)/\mathrm{Z}(A)$ is a subgroup of the virtually polycyclic group $B$. Therefore, $\cent_G(A)$ is itself virtually polycyclic (see \cite[Proposition~2 in Section~1.A]{Segal}) and, in particular, it is residually finite (\cite[Theorem~1 in Section~1.C]{Segal}). Thus, there is a finite index subgroup $B' \lhd_f \cent_G(A)$ such that $B' \cap A=B' \cap \mathrm{Z}(A)=\{1\}$. Then $AB' \cong A \times B'$, $|G:AB'|<\infty$ and $B'$ is isomorphic to a finite index subgroup of $B$ because it maps injectively into $B$ under the quotient homomorphism $G \to G/A=B$.
\end{proof} 

Since virtually free groups that are not virtually cyclic have finite centers, we can combine \Cref{lem:benign_ext-2} with \Cref{lem:comm_with_FxZn->emb} to obtain the following.

\begin{cor}\label{cor:benign_ext_of_vfree_by_vab_embeds} Suppose that a group $G$ has a virtually free normal subgroup $A \lhd G$ such that $B=G/A$ is finitely generated and virtually abelian. If $A$ is not virtually cyclic and the natural homomorphism $B \to \out(A)$ has finite image then $G$ embeds as a finite index subgroup in the direct product of a virtually free group and a finitely generated virtually abelian group. In particular, $G$ satisfies claims (i)--(iii) of \Cref{cor:gps_commens_to_FxZn}.
\end{cor}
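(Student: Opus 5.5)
The plan is to combine \Cref{lem:benign_ext-2} with \Cref{lem:comm_with_FxZn->emb}, after checking the hypotheses of the former, and then to read off claims (i)--(iii) from \Cref{cor:gps_commens_to_FxZn}.

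First I verify that $A$ has finite center. Since $A$ is virtually free and not virtually cyclic, it has a finite index free subgroup $F$ of rank at least $2$, and we may take $F \lhd A$. The center $\mathrm{Z}(A)$ intersects $F$ in central elements of $F$, and $F$ has trivial center, so $\mathrm{Z}(A)\cap F=\{1\}$. Hence $\mathrm{Z}(A)$ embeds in $A/F$ and is finite. Next, $B$ is finitely generated and virtually abelian, hence virtually polycyclic, and by hypothesis the natural homomorphism $B \to \out(A)$ has finite image. So \Cref{lem:benign_ext-2} applies and gives a finite index subgroup of $G$ isomorphic to $A\times B'$, with $|B:B'|<\infty$.

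It remains to show that $G$ is commensurable with $F\times\Z^n$ for a free group $F$. Take $F\leqslant A$ free of finite index, as above. Since $B'$ is finitely generated and virtually abelian, it has a finite index subgroup $\Z^n$. Then $F\times\Z^n$ has finite index in $A\times B'$, hence in $G$, so $G$ is commensurable with $F\times \Z^n$. \Cref{lem:comm_with_FxZn->emb} now embeds $G$ as a finite index subgroup of a direct product of a virtually free group and a finitely generated virtually abelian group. Claims (i)--(iii) then follow from \Cref{cor:gps_commens_to_FxZn}, since $G$ is commensurable with the direct product of a free group and a finitely generated abelian group.

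There is no real obstacle here. The only points needing care are the finiteness of $\mathrm{Z}(A)$, which is where the assumption that $A$ is not virtually cyclic is used, and making sure the finite index subgroup produced has exactly the form $F\times\Z^n$ that \Cref{lem:comm_with_FxZn->emb} requires.
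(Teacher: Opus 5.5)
Your proposal is correct and follows the paper's own route. The paper gets this corollary in one line: a virtually free group that is not virtually cyclic has finite center, so \Cref{lem:benign_ext-2} yields a finite index subgroup $A\times B'$, and \Cref{lem:comm_with_FxZn->emb} then applies. You make explicit what the paper leaves implicit: the center computation, that $B$ is virtually polycyclic, and the passage to a finite index subgroup $F\times\Z^n$.
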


In \Cref{cor:benign_ext_of_vfree_by_vab_embeds} it is indeed necessary to assume that $A$ is not virtually cyclic. For example, the Heisenberg group $H_3(\Z)$ is a central extension of $\Z$ by $\Z^2$ but it does not have (LR) (see \cite[Example~9.2]{Min-virt_retr_props}).

\printbibliography 
\end{document}